\newcommand{\U}{{\mathcal U}}
\newcommand{\0}{{\mathbf 0}}
\newcommand{\C}{{\mathbb C}}
\newcommand{\Z}{{\mathbb Z}}
\newcommand{\R}{{\mathbb R}}
\newtheorem{defn0}{Definition}[section]
\newtheorem{prop0}[defn0]{Proposition}
\newtheorem{conj0}[defn0]{Conjecture}
\newtheorem{thm0}[defn0]{Theorem}
\newtheorem{lem0}[defn0]{Lemma}
\newtheorem{corollary0}[defn0]{Corollary}
\newtheorem{example0}[defn0]{Example}
\newtheorem{remark0}[defn0]{Remark}
\newtheorem{question0}[defn0]{Question}
\newtheorem{exercise0}[defn0]{Exercise}
\newenvironment{defn}{\begin{defn0}}{\end{defn0}}
\newenvironment{prop}{\begin{prop0}}{\end{prop0}}
\newenvironment{thm}{\begin{thm0}}{\end{thm0}}
\newenvironment{cor}{\begin{corollary0}}{\end{corollary0}}
\newenvironment{exm}{\begin{example0}\rm}{\end{example0}}
\newcommand{\propref}[1]{Proposition~\ref{#1}}
\newcommand{\thmref}[1]{Theorem~\ref{#1}}
\newcommand{\corref}[1]{Corollary~\ref{#1}}
\title{Milnor fibers of homogeneous polynomials of prime power degree}
\subjclass[2010]{32B15, 32C18, 32B10, 32S25, 32S15, 32S55}
\author{David B. Massey}
\date{}
\begin{document}

\begin{abstract} We consider a homogeneous polynomial of degree equal to a prime power and examine the lowest ``interesting'' degree cohomology of the Milnor fiber.
 \end{abstract}

\maketitle




\section{Introduction}

Let $\U$ be an open neighborhood of the origin in $\C^{n+1}$ and let $f:(\U,\0)\rightarrow(\C, 0)$ be a nowhere-locally-constant analytic function. We use $(z_0, \dots, z_n)$ for coordinates on $\U$, and let $\Sigma f$ denote the critical locus of $f$, i.e., 
$$
\Sigma f:=V\left(\frac{\partial f}{\partial z_0},\dots, \frac{\partial f}{\partial z_n}\right).
$$
 Note that near the origin, $\Sigma f\subseteq V(f)$. Let $s:=\dim_0\Sigma f$.

Consider the Milnor fiber $F_{f, \0}$ of $f$ at the origin. The reduced cohomology $\widetilde H^*(F_{f,\0};\,\Z)$ is possibly non-zero only in degrees $k$ where $n-s\leq k\leq n$. Thus, we consider degree $(n-s)$ to be the lowest degree cohomology which is ``interesting''. These $\Z$-modules are finitely-generated and $\widetilde H^{n-s}(F_{f,\0};\,\Z)$ is free abelian, but it is in general unknown how to calculate these $\Z$-modules or their ranks.

Suppose that $\mathcal S$ is a Whitney stratification of $V(f)$ in a neighborhood of the origin. For each $s$-dimensional irreducible component $C$ of $\Sigma f$ through the origin, let $C^\circ$ denote the unique stratum of $\mathcal S$ such that $C=\overline{C^\circ}$.  For each $p\in C^\circ$ near $\0$, there is $F_{f, p}$, the Milnor fiber of $f$ at $p$. If $N$ is a normal slice to $C$ at $p$, $f_{|_N}$ has an isolated critical point at $p$, and $F_{f, p}$ is homeomorphic to the product of a polydisk with $F_{f_{|_N}, p}$. Thus, $\widetilde H^{*}(F_{f,p};\,\Z)=0$ except in degree $(n-s)$ and
$$
\widetilde H^{n-s}(F_{f,p};\,\Z)\cong \widetilde H^{n-s}(F_{f_{|_N}, p};\,\Z)\cong\Z^{\mu^\circ_C},
$$
where $\mu^\circ_C$ is the Milnor number of $f_{|_N}$ at $p$; this is independent of both $p$ and the normal slice $N$. We refer to $\mu_C^\circ$ as the {\bf generic Milnor number of $f$ on $C$}.

In terms of the derived category and perverse sheaves, this cohomology $\widetilde H^{n-s}(F_{f,p};\,\Z)$ for $p\in C^\circ$ is the stalk local system obtained by restricting the shifted complex of vanishing cycles along $f$ to the stratum $C^\circ$. As we shall discuss in Section 2, this local system is characterized by a monodromy representation (the {\bf internal} or {\bf vertical} monodromy) of $\pi_1(C^\circ)$ into $\operatorname{Aut}_\Z\big(\widetilde H^{n-s}(F_{f,p};\,\Z)\big)$. We denote the invariant $\Z$-submodule of this monodromy representation by $\operatorname{inv}\left\{\widetilde{\mathbf F}_{f,C^\circ}\right\}$.

By a result of Siersma (\cite{siersmavarlad}) in the case $s=1$ and, more generally,  by a result of Maxim, P\u aunescu, and Tib\u ar (\cite{MPT}; recalled in \thmref{thm:supersiersma}) using the cosupport condition for the perverse sheaf of shifted vanishing cycles, the cohomology at the Milnor fiber at the origin is not independent of the cohomology of the nearby Milnor fibers; in fact, $\widetilde H^{n-s}(F_{f,\0};\,\Z)$ injects into the direct sum of the $\operatorname{inv}\left\{\widetilde{\mathbf F}_{f,C^\circ}\right\}$, where the direct sum is over all  $s$-dimensional irreducible components $C$ of $\Sigma f$ through the origin. From this, one immediately obtains the inequality that $\operatorname{rank}\widetilde H^{n-s}(F_{f, \0};\Z)\leq \sum_C \mu^\circ_C$, where the summation is over the $s$-dimensional irreducible components $C$ of  $\Sigma f$, and any upper bounds on any $\operatorname{rank}\big(\operatorname{inv}\big\{\widetilde{\mathbf F}_{f,C^\circ}\big\}\big)$ gives a better bound on $\operatorname{rank}\widetilde H^{n-s}(F_{f, \0};\Z)$.

\smallskip

In \thmref{thm:relmono}, we recall an argument of Steenbrink from 6.4 of \cite{steenbrink} which tells us a strong relationship exists between the Milnor monodromy  and the internal monodromy; as our context is different from that of Steenbrink, we supply the proof, but it remains the same. By combining Steenbrink's result with that of Maxim, P\u aunescu, and Tib\u ar, and using the Monodromy Theorem (\cite{clemens}, \cite{sgavii1}, \cite{landman}, \cite{lemono1}) and A'Campo's Theorem (\cite{acamp}) together with a little number theory, we obtain in  \thmref{thm:mainintro}:

\begin{thm} Suppose that $f\in \C[z_0, \dots, z_n]$ is homogeneous of degree $\mathfrak p^m$, where $\mathfrak p$ is prime. Let  $s:=\dim\Sigma f$. 

Then, we have the following upper-bound on the rank of the free abelian group $\widetilde H^{n-s}(F_{f, \0};\Z)$:
$$
\operatorname{rank} \widetilde H^{n-s}(F_{f, \0};\Z)\leq \sum_C \left\lfloor\frac{\mu_C^\circ+\mathfrak p\left\lfloor\frac{\mu^\circ_C-(-1)^{n+1-s}}{\mathfrak p}\right\rfloor +(-1)^{n+1-s}}{2}\right\rfloor,
$$
where the summation is over the $s$-dimensional irreducible components $C$ of $\Sigma f$ and $\lfloor\,\rfloor$ denotes the floor function.
\end{thm}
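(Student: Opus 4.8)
The plan is to bound the rank of each summand $\operatorname{inv}\{\widetilde{\mathbf F}_{f,C^\circ}\}$ separately and then add them up. By \thmref{thm:supersiersma} the group $\widetilde H^{n-s}(F_{f,\0};\Z)$ injects into $\bigoplus_C\operatorname{inv}\{\widetilde{\mathbf F}_{f,C^\circ}\}$, the sum being over the $s$-dimensional irreducible components $C$ of $\Sigma f$ through $\0$, so $\operatorname{rank}\widetilde H^{n-s}(F_{f,\0};\Z)\le\sum_C\operatorname{rank}\!\big(\operatorname{inv}\{\widetilde{\mathbf F}_{f,C^\circ}\}\big)$, and it suffices to show that each $\operatorname{rank}\!\big(\operatorname{inv}\{\widetilde{\mathbf F}_{f,C^\circ}\}\big)$ is at most the corresponding term in the displayed sum.

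Fix such a $C$. Since the partial derivatives of $f$ are homogeneous, $\Sigma f$ is a cone, invariant under the $\C^*$-action $z\mapsto tz$; I pick a generic $p\in C^\circ$, so that the orbit $\C^*p$ lies in $C^\circ$, and take the loop $\gamma\colon\theta\mapsto e^{2\pi i\theta}p$, $\theta\in[0,1]$. Let $\widetilde h$ be the (transversal) Milnor monodromy of $f_{|_N}$ at $p$ acting on $\widetilde H^{n-s}(F_{f_{|_N},p};\Z)\cong\Z^{\mu^\circ_C}$, and $h_\gamma$ the internal monodromy of $\widetilde{\mathbf F}_{f,C^\circ}$ around $\gamma$. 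The key geometric input is \thmref{thm:relmono} (Steenbrink's relation), combined with the $\C^*$-action: one circuit of $\gamma$ drags the local Milnor fibre at $p$ once around $C^\circ$ while the $\C^*$-action simultaneously winds it $\deg f=\mathfrak p^m$ times around the Milnor circle, which forces $h_\gamma=\widetilde h^{\,\pm\mathfrak p^m}$ and hence $\ker_\Q(h_\gamma-\operatorname{id})=\ker_\Q(\widetilde h^{\,\mathfrak p^m}-\operatorname{id})$. Since $\operatorname{inv}\{\widetilde{\mathbf F}_{f,C^\circ}\}$ is the module of invariants of the whole $\pi_1(C^\circ)$-representation, it is contained in the invariants of the single element $\gamma$, so
$$
\operatorname{rank}\!\big(\operatorname{inv}\{\widetilde{\mathbf F}_{f,C^\circ}\}\big)\ \le\ \dim_\Q\ker_\Q\!\big(\widetilde h^{\,\mathfrak p^m}-\operatorname{id}\big).
$$

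It then remains to bound this dimension by elementary number theory. As an automorphism of the lattice $\Z^{\mu^\circ_C}$ whose eigenvalues are roots of unity (the Monodromy Theorem), $\widetilde h$ has characteristic polynomial equal to a product $\prod_N\Phi_N(t)^{c_N}$ of cyclotomic polynomials, so $\mu^\circ_C=\sum_N c_N\varphi(N)$; as $\mathfrak p$ is prime, the divisors of $\mathfrak p^m$ are exactly $1,\mathfrak p,\dots,\mathfrak p^m$, whence $\dim_\Q\ker_\Q(\widetilde h^{\,\mathfrak p^m}-\operatorname{id})\le A:=\sum_{j=0}^{m}c_{\mathfrak p^j}\varphi(\mathfrak p^j)$. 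Next I invoke A'Campo's Theorem: $f_{|_N}$ has a critical point at $p$ and $\widetilde H^{*}(F_{f_{|_N},p})$ is concentrated in degree $n-s$, so the vanishing of the Lefschetz number of $\widetilde h$ gives $\operatorname{tr}\widetilde h=(-1)^{n+1-s}=:\epsilon$; on the other hand $\operatorname{tr}\widetilde h=\sum_N c_N\sigma_N$, where $\sigma_N$ is the sum of the primitive $N$th roots of unity (so $\sigma_1=1$, $\sigma_{\mathfrak p}=-1$, $\sigma_{\mathfrak p^j}=0$ for $j\ge 2$, and $|\sigma_N|\le 1$ in general). Writing $B:=\mu^\circ_C-A\ge 0$ and $E:=\sum_{N\notin\{1,\mathfrak p,\dots,\mathfrak p^m\}}c_N\sigma_N$, one has $\epsilon=(c_1-c_{\mathfrak p})+E$ with $|E|\le B$ (each such $N$ contributes at most $c_N$ to $|E|$ and at least $c_N\varphi(N)\ge c_N$ to $B$), and $A-(c_1-c_{\mathfrak p})=c_{\mathfrak p}\mathfrak p+\sum_{j\ge 2}c_{\mathfrak p^j}\varphi(\mathfrak p^j)$ is a nonnegative multiple of $\mathfrak p$. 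Substituting these into the right-hand side of the asserted bound and simplifying the two nested floors collapses it to $A+\left\lfloor\frac{B+E+\mathfrak p\lfloor(B-E)/\mathfrak p\rfloor}{2}\right\rfloor$, whose second summand is $\ge 0$ because $B\pm E\ge 0$. Hence $\operatorname{rank}\!\big(\operatorname{inv}\{\widetilde{\mathbf F}_{f,C^\circ}\}\big)\le A$ is at most the asserted term, and summing over $C$ completes the proof.

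The step I expect to be the main obstacle is the monodromy identification $h_\gamma=\widetilde h^{\,\pm\mathfrak p^m}$: Steenbrink's relation must be set up so that it is the \emph{internal} monodromy — rather than the Milnor monodromy of $f$ at $\0$, which governs precisely the cohomology we are trying to estimate — that is compared with the transversal monodromy $\widetilde h$, and the orientation bookkeeping of the $\C^*$-winding must be kept straight. Once that identification and the A'Campo trace computation are in hand, the remainder is the routine cyclotomic bookkeeping with the $\sigma_N$ and the floor functions indicated above.
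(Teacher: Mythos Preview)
Your argument is correct, and it shares with the paper the same geometric ingredients: the Maxim--P\u aunescu--Tib\u ar injection (\thmref{thm:supersiersma}), Steenbrink's relation $Q^{(n-s)}_{f,\nu}=(T^{(n-s)}_{f,p})^{-\mathfrak p^m}$ (\thmref{thm:relmono}), the Monodromy Theorem, and A'Campo's trace identity. Your anticipated obstacle---the identification $h_\gamma=\widetilde h^{\,\pm\mathfrak p^m}$---is precisely what \thmref{thm:relmono} supplies, since the Milnor monodromy of $f$ at $p$ on $\widetilde H^{n-s}$ coincides with that of $f_{|_N}$ at $p$.

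Where you genuinely diverge from the paper is in the number theory. The paper first proves a separate lemma (\thmref{thm:lef}): for an integer matrix $A$, $\operatorname{char}_A(x)\equiv\operatorname{char}_{A^{\mathfrak p^m}}(x)$ in $(\Z/\mathfrak p\Z)[x]$, via Frobenius/Fermat. From this it gets $\operatorname{trc}_\nu\equiv\epsilon\pmod{\mathfrak p}$, combines with $\operatorname{trc}_\nu\le\mu^\circ_C$ to obtain $\operatorname{trc}_\nu\le\mu^\circ_C-r_C$, and then uses the elementary bound $e_\nu(1)\le(\mu^\circ_C+\operatorname{trc}_\nu)/2$ for a matrix with root-of-unity eigenvalues. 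You instead decompose $\operatorname{char}_{\widetilde h}$ into cyclotomic factors, identify $A=\sum_{j}c_{\mathfrak p^j}\varphi(\mathfrak p^j)$ as the \emph{exact} algebraic multiplicity of $1$ for $\widetilde h^{\mathfrak p^m}$, and then verify directly that the theorem's right-hand side equals $A+\big\lfloor(B+E+\mathfrak p\lfloor(B-E)/\mathfrak p\rfloor)/2\big\rfloor\ge A$. Both routes are short; yours yields the sharper intermediate inequality $\operatorname{rank}\le A$ and handles the case $n-s=0$ uniformly (the paper treats it separately via \propref{prop:special}), while the paper's mod-$\mathfrak p$ characteristic-polynomial lemma is a clean standalone statement of independent interest.
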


\medskip

After we prove the above theorem, we provide examples and then make some closing remarks.

\medskip

We wish to thank Laurentiu Maxim for many comments, and in particular for providing a reference to \cite{MPT} for the vanishing cycle result in \thmref{thm:supersiersma}. We also thank Laurentiu for pointing out Steenbrink's proof in 6.4 of \cite{steenbrink} of  the result which is referred to in this paper as \thmref{thm:relmono}. We mistakenly believed that we had proved this first in  \cite{pervdual}; crediting J. Steenbrink for this result is long overdue.

\medskip

\section{Local Systems and the result of Maxim, P\u aunescu, and Tib\u ar}

The point of this section to introduce terminology and notation required to state and understand the statement of the vanishing cycle result in  Theorem 3.4.a of Maxim, P\u aunescu, and Tib\u ar from \cite{MPT}. However, while discussing monodromy, we also recall the statements of the Monodromy Theorem and A'Campo's Theorem on the Lefschetz number of the Milnor monodromy.

\smallskip

Throughout this section, we continue with $\U$ being an open neighborhood of the origin in $\C^{n+1}$,  $f:(\U,\0)\rightarrow(\C, 0)$ being a nowhere-locally constant analytic function, and we are allowing  $\dim_\0\Sigma f$  to be arbitrary.

For each $p\in V(f)$,  for all $k\geq 0$, we have the Milnor monodromy automorphism $T^{(k)}_{f, p}$ acting on $H^k(F_{f, p}; \Z)$; this is obtained by letting the value of $f$ travel counterclockwise once around the base circle of the Milnor fibration and taking the induced automorphism. We have put the degree in parentheses on the monodromy to distinguish it from raising to a power (which we will do later).

For each $p\in V(f)$, the cohomology $H^k(F_{f,p};\,\Z)$ is a finitely-generated $\Z$-module and, hence, is a direct sum of $\Z^{b_k}$ for some non-negative integer $b_k$ (the degree $k$ Betti number) with a torsion submodule. The monodromy $T^{(k)}_{f, p}$is an automorphism of this entire $\Z$-module.

However, for the remainder of this paper, whenever we discuss the determinant, trace, eigenvalues, characteristic polynomials, or Lefschetz numbers of automorphisms of $H^k(F_{f,p};\,\Z)$, we are referring to the automorphisms induced on the free abelian part of $H^{k}(F_{f, p}; \Z)$, that is, induced on $H^{k}(F_{f, p}; \Z)$ modulo its torsion submodule. These automorphisms are represented by invertible integral matrices. Thus the  determinant would be a unit in $\Z$, i.e., $\pm1$ and the trace must be an integer; in fact, the entire characteristic polynomial would have integer coefficients.

\medskip

There is the well-known:

\begin{thm} \textnormal{(The Monodromy Theorem, \cite{clemens}, \cite{sgavii1}, \cite{landman}, \cite{lemono1}):} For all $p\in V(f)$, for each $k$, the Milnor monodromy $T^{(k)}_{f, p}$ is quasi-unipotent, i.e., has complex eigenvalues which are all roots of unity. 
\end{thm}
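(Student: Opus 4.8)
This is the classical Monodromy Theorem, so rather than invent an independent argument I would recall one of the standard proofs; the one I find most transparent proceeds through resolution of singularities. First, by Hironaka choose an embedded resolution $\pi\colon Y\to\U$ of $V(f)$: a proper map, an isomorphism over $\U\setminus V(f)$, such that $(f\circ\pi)^{-1}(0)$ is a simple normal crossing divisor. In suitable local analytic coordinates near any point of $Y$ one then has $f\circ\pi = u\cdot x_1^{m_1}\cdots x_r^{m_r}$ with $u$ a local unit and $m_i\geq 1$; let $N$ be the least common multiple of all the multiplicities $m_i$ that occur over a neighborhood of $\pi^{-1}(p)$.

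The crux is that $\pi$ faithfully models the Milnor fibration of $f$ at $p$ and its monodromy in terms of this normal-crossing data. Concretely, the nearby cycle complex $\psi_f\Z$ near $p$ may be computed on $Y$, and the resulting description (this is the content of the classical arguments of Clemens, A'Campo, L\^e, and Landman, with Deligne's treatment in SGA 7) equips $H^k(F_{f,p};\Z)$ with a finite filtration that is preserved by $T^{(k)}_{f,p}$ and on whose associated graded pieces $T^{(k)}_{f,p}$ acts with finite order dividing $N$ --- because the local monodromy of $x_1^{m_1}\cdots x_r^{m_r}$ is literally periodic of order $\operatorname{lcm}(m_i)$. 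I would then invoke the elementary linear-algebra fact that an automorphism preserving a finite filtration whose graded pieces have all eigenvalues among the $N$-th roots of unity itself has all eigenvalues among the $N$-th roots of unity; hence $T^{(k)}_{f,p}$ is quasi-unipotent, with semisimple part of order dividing $N$, which is the assertion (and a little more).

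Two alternative routes are worth noting. When $f$ is algebraic one can argue arithmetically: $T^{(k)}_{f,p}$ preserves the lattice $H^k(F_{f,p};\Z)/(\text{torsion})$, so its characteristic polynomial has integer coefficients and its eigenvalues are algebraic integers; spreading the situation out over a finitely generated base and passing to $\ell$-adic \'etale cohomology, Grothendieck's local monodromy theorem (or a Frobenius weight estimate) shows that every Galois conjugate of every eigenvalue has absolute value $1$, whereupon Kronecker's theorem forces the eigenvalues to be roots of unity. Hodge-theoretically: $H^k(F_{f,p})$ carries Steenbrink's limit mixed Hodge structure, and the $SL_2$-orbit / nilpotent orbit theorem shows that the semisimple part of the monodromy has finite order. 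In every approach the genuine content is geometric (or arithmetic-geometric): in the resolution approach it is the claim that the Milnor fibration and its monodromy are captured, with a monodromy-stable filtration having finite-order graded pieces, by an embedded resolution. That is the step I would expect to be the main obstacle; once it is in place the conclusion is formal.
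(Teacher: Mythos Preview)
The paper does not prove this statement at all: it simply records the Monodromy Theorem as a known result and cites \cite{clemens}, \cite{sgavii1}, \cite{landman}, \cite{lemono1}. So there is no ``paper's own proof'' to compare against; the authors are content to invoke the classical theorem as a black box.

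Your sketch is a faithful outline of the standard resolution-of-singularities proof (Clemens, A'Campo, Landman, L\^e), and the two alternative routes you mention (Grothendieck's $\ell$-adic local monodromy theorem plus Kronecker; Steenbrink's limit mixed Hodge structure) are also genuine proofs in the literature. You correctly identify the real content: that an embedded resolution models the Milnor fibration well enough to produce a monodromy-stable filtration with finite-order action on graded pieces. As a recollection of the classical argument this is fine; just be aware that in the context of this paper no proof is expected or given, and all that is actually used downstream is the bare conclusion that the eigenvalues are roots of unity (for \corref{cor:inversechar}, \corref{cor:unity}, and the trace estimates in \thmref{thm:mainintro}).
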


\medskip

As an easy, but not so well known, corollary to this, we have:

\begin{cor}\label{cor:inversechar} The characteristic polynomial of $\left(T^{(k)}_{f, p}\right)^{-1}$ is equal to the characteristic polynomial of $T^{(k)}_{f, p}$.
\end{cor}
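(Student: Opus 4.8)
The plan is to obtain this as a direct consequence of the Monodromy Theorem together with the fact, noted just above in the excerpt, that the characteristic polynomial of $T:=T^{(k)}_{f,p}$ acting on the free abelian part of $H^k(F_{f,p};\Z)$ is a monic polynomial with integer coefficients. Write $p(t)=\det(tI-T)=\prod_{i=1}^N(t-\lambda_i)$, the product being over the eigenvalues of $T$ listed with algebraic multiplicity. The first step is the elementary linear-algebra observation that $T^{-1}$ has eigenvalues $\lambda_1^{-1},\dots,\lambda_N^{-1}$, again with the same multiplicities (a size-$r$ Jordan block of $T$ with eigenvalue $\lambda$ is conjugate, after inverting, to a size-$r$ Jordan block with eigenvalue $\lambda^{-1}$), so that the characteristic polynomial of $T^{-1}$ is $\prod_{i=1}^N(t-\lambda_i^{-1})$. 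Hence it suffices to prove that the multiset $\{\lambda_i^{-1}\}$ coincides with the multiset $\{\lambda_i\}$.

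For this, I would argue as follows. Since $p(t)\in\Z[t]\subseteq\R[t]$, complex conjugation permutes the roots of $p$ while preserving multiplicities; that is, $\{\overline{\lambda_i}\}=\{\lambda_i\}$ as multisets. By the Monodromy Theorem, each $\lambda_i$ is a root of unity, so $|\lambda_i|=1$ and therefore $\lambda_i^{-1}=\overline{\lambda_i}$ for every $i$. Combining these, $\{\lambda_i^{-1}\}=\{\overline{\lambda_i}\}=\{\lambda_i\}$, which is exactly what was needed; comparing monic polynomials with the same multiset of roots gives that the characteristic polynomials of $T^{-1}$ and $T$ agree. Equivalently, and more concretely, one has the identity $\det(tI-T^{-1})=(-1)^N(\det T)^{-1}\,t^N p(1/t)$, and the symmetry $\{\lambda_i\}=\{\lambda_i^{-1}\}$ forces this reversed, normalized polynomial to equal $p(t)$; but the eigenvalue bookkeeping seems the cleanest route.

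There is no genuine obstacle here, as the statement is a short corollary. The one point that merits care is that the whole discussion takes place on $H^k(F_{f,p};\Z)$ modulo its torsion submodule, where "characteristic polynomial" is well-defined and has integer coefficients — the conjugation-stability step uses precisely this integrality (indeed just reality of the coefficients), and it is the combination of that with the Monodromy Theorem's conclusion that eigenvalues are roots of unity (hence that conjugation equals inversion on the eigenvalue multiset) that makes the argument work.
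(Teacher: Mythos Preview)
Your argument is correct and essentially identical to the paper's own proof: both use that the characteristic polynomial has real (integer) coefficients so the eigenvalue multiset is closed under complex conjugation, and then invoke the Monodromy Theorem to identify conjugation with inversion on roots of unity, concluding that the eigenvalue multiset of $T^{-1}$ coincides with that of $T$. Your extra remarks on Jordan blocks and the reversed-polynomial identity are fine but not needed.
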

\begin{proof} The characteristic polynomial of $T^{(k)}_{f, p}$ has integer (in particular, real) coefficients. Thus the roots other than $\pm1$, the eigenvalues with multiplicities, occur in complex conjugate pairs. But the conjugate of a root of unity is its reciprocal. Thus the eigenvalues $\neq\pm1$ occur in reciprocal pairs. However the eigenvalues (with multiplicities) of $\left(T^{(k)}_{f, p}\right)^{-1}$ are the reciprocals of the eigenvalues of $T^{(k)}_{f, p}$, and we are finished.
\end{proof}

\medskip

We also have the following well-known theorem:

\begin{thm}\label{thm:acampo}\textnormal{(A'Campo's Theorem,  \cite{acamp}):} Suppose that $p\in\Sigma f\cap V(f)$. Then the Lefschetz number $\mathfrak L\big\{T^{(*)}_{f, p}\big\}$ is equal to $0$.
\end{thm}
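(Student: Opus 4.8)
The plan is to reduce the statement to a short combinatorial observation about an embedded resolution, using A'Campo's formula for the monodromy zeta function (part of the same circle of ideas in \cite{acamp}); the geometric content behind that formula is the construction of a geometric monodromy with no fixed points, and I will indicate that route as well.

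Fix coordinates so that $p=\0$, and let $\sigma\colon X\to\U$ be an embedded resolution of $V(f)$ near $\0$, obtained as a finite sequence of blowups whose centers are smooth and lie in the preimage of $\0$; thus $\sigma$ is an isomorphism over $\U\smallsetminus V(f)$, and $\sigma^*f$, as a divisor, equals $\sum_i m_i E_i$ with the $E_i$ smooth, having simple normal crossings, and $m_i\geq 1$. Put $\mathcal E:=\sigma^{-1}(\0)$ and $E_i^\circ:=E_i\smallsetminus\bigcup_{j\neq i}E_j$. A'Campo's formula expresses the Milnor monodromy zeta function $\zeta_{f,\0}(t):=\prod_k\det\!\big(\operatorname{Id}-t\,T^{(k)}_{f,\0}\big)^{(-1)^{k+1}}$ as
$$
\zeta_{f,\0}(t)\ =\ \prod_i\big(1-t^{m_i}\big)^{-\chi\left(E_i^\circ\cap\mathcal E\right)} .
$$
Passing to logarithms — both sides have constant term $1$ — and comparing the coefficients of $t$, to which only the factors with $m_i=1$ can contribute, gives
$$
\mathfrak L\big\{T^{(*)}_{f,\0}\big\}\ =\ \sum_{i\,:\,m_i=1}\chi\!\left(E_i^\circ\cap\mathcal E\right).
$$

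So it suffices to show $E_i^\circ\cap\mathcal E=\emptyset$ whenever $m_i=1$, and this is exactly where the hypothesis $\0\in\Sigma f$ is used. Since all partials of $f$ vanish at $\0$, the order of vanishing of $f$ at $\0$ is at least $2$, so the exceptional divisor of the first blowup (centered at $\0$) occurs in $\sigma^*f$ with multiplicity $\geq 2$. Inductively, every subsequent blowup is centered in a smooth subvariety of $\sigma^{-1}(\0)$, which already lies in the union of the exceptional divisors created so far, each of multiplicity $\geq 2$ in the partial total transform; hence the newly created exceptional divisor again acquires multiplicity $\geq 2$. Thus \emph{every} exceptional component of $\sigma$ has multiplicity $\geq 2$ in $\sigma^*f$, so the components $E_i$ with $m_i=1$ are all non-exceptional (pieces of the strict transform of $V(f)$). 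On the other hand $\mathcal E=\sigma^{-1}(\0)$ is contained in the union of the exceptional components of $\sigma$. Therefore, for $m_i=1$, the open part $E_i^\circ$ — which avoids every other component, in particular every exceptional one — is disjoint from $\mathcal E$, the displayed sum is empty, and $\mathfrak L\big\{T^{(*)}_{f,\0}\big\}=0$.

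The main obstacle is really the input rather than this bookkeeping: one needs Hironaka's theorem for the resolution and A'Campo's zeta-function formula \cite{acamp}, the latter being the genuinely substantive step and precisely what one cites. If instead one wants a self-contained geometric proof, the difficulty shifts: one realizes the geometric monodromy as the time-one map of a flow lifted through $\sigma$; near a point of $\mathcal E$, in local coordinates with $\sigma^*f=y_1^{m_1}\cdots y_r^{m_r}$ and some $m_{i_0}\geq 2$, the rotation $y_{i_0}\mapsto e^{2\pi i/m_{i_0}}y_{i_0}$ (all other coordinates fixed) has no fixed point on the lifted Milnor fiber, where $y_{i_0}\neq 0$; the delicate part — the technical heart of A'Campo's argument — is patching these local rotations into a single geometric monodromy of the (compact) Milnor fiber whose time-one map is globally fixed-point-free, after which the Lefschetz fixed point theorem yields $\mathfrak L\big\{T^{(*)}_{f,\0}\big\}=0$ at once.
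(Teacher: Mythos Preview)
The paper does not prove this theorem; it is quoted from \cite{acamp} as background. Your sketch follows A'Campo's own strategy and the zeta-function bookkeeping is correct, but there is a genuine gap in the step where you posit an embedded resolution $\sigma$ ``obtained as a finite sequence of blowups whose centers are smooth and lie in the preimage of $\0$.'' That is fine when $\0$ is an \emph{isolated} critical point, but the setting of this paper is precisely the opposite: typically $s=\dim_\0\Sigma f\geq 1$, so $V(f)$ is singular along a positive-dimensional locus through $\0$, and any embedded resolution (even after shrinking $\U$) must blow up centers lying over points of $\Sigma f$ other than $\0$ in order to achieve simple normal crossings near $\sigma^{-1}(\0)$. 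Your induction (``the center lies in the union of the exceptional divisors created so far, each of multiplicity $\geq 2$'') then breaks, and with it the conclusion that every exceptional component has $m_i\geq 2$. The same unjustified assertion ``some $m_{i_0}\geq 2$'' recurs in your geometric-monodromy paragraph.

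The repair is short and bypasses the induction entirely. For an \emph{arbitrary} embedded resolution $\sigma$, suppose $x\in E_i^\circ\cap\sigma^{-1}(\0)$ with $m_i=1$. Near $x$ one has $f\circ\sigma=u\cdot y_1$ with $u$ a local unit and $y_1$ a local equation for $E_i$, so $d(f\circ\sigma)_x=u(x)\,dy_1\neq 0$. But the chain rule gives $d(f\circ\sigma)_x=(df)_{\sigma(x)}\circ (d\sigma)_x=(df)_{\0}\circ(d\sigma)_x=0$, since $\0\in\Sigma f$. This contradiction shows that $E_i^\circ\cap\sigma^{-1}(\0)=\emptyset$ whenever $m_i=1$, and your displayed sum vanishes as desired.
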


\bigskip 

Before stating the result of Maxim, P\u aunescu, and Tib\u ar that we need, we must have a discussion about local systems.

\medskip

\begin{defn}\label{defn:inv} Let $Y$ be a connected complex analytic space. A {\bf local system} $\mathbf L$ of $\Z$-modules  on $Y$ is a locally constant sheaf of $\Z$-modules on $Y$. Specifying such a local system is equivalent to selecting a point $p\in Y$ and specifying a group homomorphism (the monodromy representation)
$$
\rho:\pi_1(Y, p)\rightarrow\operatorname{Aut}_\Z(\mathbf L_p),
$$
where $\mathbf L_p$ is the stalk of $\mathbf L$ at $p$. The global sections of $\mathbf L$ are
$$
H^0(Y; \mathbf L)\cong \bigcap_{\gamma\in \pi_1(Y, p)}\operatorname{ker}\{\operatorname{id}-\rho(\gamma)\};
$$
this is the submodule of $\mathbf L_p$ of elements which are invariant under all of the monodromy actions and, as the isomorphism-type of this submodule is independent of $p$, we denote this submodule by $\operatorname{inv}\{\mathbf L\}$.
\end{defn}

\medskip

Now we need to define a special local system related to the Milnor fiber.

\smallskip

\begin{defn}\label{defn:milnorsys} Let $\U$ be an open neighborhood of the origin in $\C^{n+1}$ and $f:(\U,\0)\rightarrow(\C, 0)$ being a nowhere-locally constant analytic function. Let $s:=\dim_\0\Sigma f$. 

Then, for each irreducible component $C$, there is an analytic Zariski open dense subset $C^\circ\subseteq C$ (e.g., a Whitney stratum in some Whitney stratification of $V(f)$) such that the reduced cohomology $\widetilde H^{n-s}(F_{f, p};\,\Z)$, for $p\in C^\circ$, is locally constant on $C^\circ$ and determines a local system given by automophisms of $\widetilde H^{n-s}(F_{f, p};\,\Z)$ as the point $p$ travels around loops in $C^\circ$.

We call this {\bf the reduced Milnor local system on $C^\circ$} and denote it by $\widetilde{\mathbf F}_{f,C^\circ}$. (In formal perverse sheaf terms, this is the local system $\left(\mathbf H^{-s}(\phi_f[-1]\Z_U^\bullet[n+1])\right)_{|_{C^\circ}}$, the restriction of the shifted vanishing cycles along $f$.) The stalk of this local system is $\widetilde H^{n-s}(F_{f, p};\,\Z)\cong \Z^{\mu_C^\circ}$.
\end{defn}

\medskip

Now, we give the result from Theorem 3.4.a of \cite{MPT}, which is obtained by using the cosupport condition satisfied by the perverse sheaf of shifted vanishing cycles along $f$.

\begin{thm}\label{thm:supersiersma} \textnormal{ (Maxim, P\u aunescu, Tib\u ar) } Let $\U$ be an open neighborhood of the origin in $\C^{n+1}$ and $f:(\U,\0)\rightarrow(\C, 0)$ being a nowhere-locally constant analytic function. Let $s:=\dim_\0\Sigma f$.

Then there is an injection 
$$\widetilde H^{n-s}(F_{f, \0};\Z)\hookrightarrow\bigoplus_{C}\operatorname{inv}\left\{\widetilde{\mathbf F}_{f,C^\circ}\right\},$$ where $C$ runs over the $s$-dimensional irreducible components of $\Sigma f$ at the origin. This injection commutes with the respective Milnor monodromies. 

In particular, $\operatorname{rank}\widetilde H^{n-s}(F_{f, \0};\Z)\leq \sum_C \mu^\circ_C$.

\end{thm}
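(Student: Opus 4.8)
The plan is to realize $\widetilde H^{n-s}(F_{f,\0};\Z)$ as the lowest stalk cohomology of the perverse sheaf of vanishing cycles at $\0$, push it into the hypercohomology of a punctured Milnor ball using the \emph{cosupport} condition at the point stratum, and then strip off the strata of $\Sigma f$ of dimension $<s$ using the cosupport conditions along them, landing in $\bigoplus_C\operatorname{inv}\{\widetilde{\mathbf F}_{f,C^\circ}\}$. Set $P^\bullet:=\phi_f[-1]\Z_\U^\bullet[n+1]$; this is a perverse sheaf on $\U$ (since $\Z_\U^\bullet[n+1]$ is perverse on the manifold $\U$ and $\phi_f[-1]$ preserves perversity), it is supported on $\Sigma f$, and $\mathcal H^{j}(P^\bullet)_p\cong\widetilde H^{j+n}(F_{f,p};\Z)$ for every $p$. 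Hence $P^\bullet$ restricted to an $s$-dimensional stratum $C^\circ$ is $\widetilde{\mathbf F}_{f,C^\circ}[s]$, while at $\0$ we have $\mathcal H^{j}(P^\bullet)_\0=0$ outside $-s\le j\le0$ with $\mathcal H^{-s}(P^\bullet)_\0=\widetilde H^{n-s}(F_{f,\0};\Z)$. (If $s=0$ the statement is trivial, so assume $s\ge1$.) The vanishing cycle functor carries a canonical monodromy automorphism $T\colon P^\bullet\xrightarrow{\ \sim\ }P^\bullet$ with stalk $T^{(\ast)}_{f,p}$ at $p$; being a global automorphism of $P^\bullet$, it is respected by every functor used below, which is what gives the monodromy-compatibility assertion for free.

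Fix a small Milnor ball $B=B_\epsilon(\0)$ and let $i\colon\{\0\}\hookrightarrow B$, $j\colon B\setminus\{\0\}\hookrightarrow B$. Applying $i^*$ to the attaching triangle $i_*i^!P^\bullet\to P^\bullet\to Rj_*j^*P^\bullet\xrightarrow{[1]}$ produces the long exact sequence
$$\cdots\to\mathcal H^{\ell}(i^!P^\bullet)\to\widetilde H^{\ell+n}(F_{f,\0};\Z)\to\mathbb H^{\ell}(B\setminus\{\0\};P^\bullet)\to\mathcal H^{\ell+1}(i^!P^\bullet)\to\cdots,$$
the middle term because $B_\epsilon\setminus\{\0\}$ is cofinal among punctured neighborhoods of $\0$. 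The cosupport condition for the perverse sheaf $P^\bullet$ at the $0$-dimensional stratum $\{\0\}$ says $\mathcal H^{\ell}(i^!P^\bullet)=0$ for $\ell<0$; putting $\ell=-s<0$ yields an injection
$$\widetilde H^{n-s}(F_{f,\0};\Z)\ \hookrightarrow\ \mathbb H^{-s}(B\setminus\{\0\};P^\bullet)$$
(an isomorphism when $s\ge2$, but only injectivity is needed).

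Now restrict to $Z:=\Sigma f\cap(B\setminus\{\0\})$, on which $Q^\bullet:=P^\bullet|_{Z}$ is perverse and $\mathbb H^{-s}(B\setminus\{\0\};P^\bullet)=\mathbb H^{-s}(Z;Q^\bullet)$. Write $Z=Z^\circ\sqcup W$ where $Z^\circ:=\bigsqcup_C\big(C^\circ\cap(B\setminus\{\0\})\big)$, the union over the $s$-dimensional components $C$ of $\Sigma f$ through $\0$, is open, smooth, of pure dimension $s$, and $W:=Z\setminus Z^\circ$ is closed of dimension $\le s-1$; this uses that for $\epsilon$ small the remaining $s$-dimensional components of $\Sigma f$ and the special points of each $C$ other than $\0$ all miss $B$. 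For $u\colon Z^\circ\hookrightarrow Z$ open and $w\colon W\hookrightarrow Z$ closed the attaching triangle gives the exact sequence
$$\mathbb H^{-s}(W;w^!Q^\bullet)\to\mathbb H^{-s}(Z;Q^\bullet)\to\mathbb H^{-s}(Z^\circ;Q^\bullet).$$
Since $w$ is a closed embedding, $w^!$ is left $t$-exact for the perverse $t$-structure, so $w^!Q^\bullet\in{}^{p}D^{\ge0}(W)$; and for $\mathcal G\in{}^{p}D^{\ge0}$ on a space of complex dimension $d$ one has $\mathbb H^{m}(\,\cdot\,;\mathcal G)=0$ for $m<-d$ (because $\Z_W[d]\in{}^{p}D^{\le0}$ and $\operatorname{Hom}({}^{p}D^{\le0},{}^{p}D^{\ge1})=0$). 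As $\dim W\le s-1$ this kills $\mathbb H^{-s}(W;w^!Q^\bullet)$, so the restriction $\mathbb H^{-s}(Z;Q^\bullet)\hookrightarrow\mathbb H^{-s}(Z^\circ;Q^\bullet)$ is injective. Finally $Q^\bullet|_{Z^\circ}=\bigoplus_C\widetilde{\mathbf F}_{f,C^\circ}[s]$, whence
$$\mathbb H^{-s}(Z^\circ;Q^\bullet)=\bigoplus_C H^0\big(C^\circ\cap(B\setminus\{\0\});\widetilde{\mathbf F}_{f,C^\circ}\big)=\bigoplus_C\operatorname{inv}\{\widetilde{\mathbf F}_{f,C^\circ}\},$$
the last equality because for $\epsilon$ small $C^\circ\cap(B\setminus\{\0\})$ is connected and carries exactly the internal monodromy of $\widetilde{\mathbf F}_{f,C^\circ}$ (e.g. the inclusion into $C^\circ$ is $\pi_1$-surjective, automatic in the homogeneous case of the main theorem). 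Composing the three injections gives the claimed injection; it commutes with the Milnor monodromies by the remark on $T$; and $\operatorname{rank}\widetilde H^{n-s}(F_{f,\0};\Z)\le\sum_C\operatorname{rank}\operatorname{inv}\{\widetilde{\mathbf F}_{f,C^\circ}\}\le\sum_C\mu^\circ_C$, since $\operatorname{inv}\{\widetilde{\mathbf F}_{f,C^\circ}\}$ is a submodule of the rank-$\mu^\circ_C$ stalk.

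The step I expect to be the main obstacle is the vanishing $\mathbb H^{-s}(W;w^!Q^\bullet)=0$: this is exactly where the cosupport condition of the perverse sheaf of vanishing cycles is indispensable, and making the reduction precise requires choosing the Milnor ball uniformly small so that the conical description $Z=Z^\circ\sqcup W$ with $\dim W\le s-1$ genuinely holds near $\0$. A secondary point is to be careful about which fundamental group computes $\operatorname{inv}\{\widetilde{\mathbf F}_{f,C^\circ}\}$; the identification in the display above is cleanest precisely in the homogeneous setting the rest of the paper needs.
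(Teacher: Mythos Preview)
The paper does not give its own proof of this theorem; it simply records the statement from \cite{MPT} and remarks that it ``is obtained by using the cosupport condition satisfied by the perverse sheaf of shifted vanishing cycles along $f$.'' Your argument is precisely a detailed implementation of that cosupport strategy, and it is correct: the two attaching triangles, the left $t$-exactness of $w^!$, and the vanishing $\mathbb H^{-s}(W;w^!Q^\bullet)=0$ via $\Z_W[\dim W]\in{}^pD^{\le0}$ are all standard and assembled correctly. So your approach coincides with what the paper (and \cite{MPT}) indicate, just made explicit.

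The one point worth flagging, which you already identify yourself, is the identification $H^0\big(C^\circ\cap(B\setminus\{\0\});\widetilde{\mathbf F}_{f,C^\circ}\big)=\operatorname{inv}\{\widetilde{\mathbf F}_{f,C^\circ}\}$. In the paper, $C^\circ$ is by definition a Whitney stratum in a stratification of $V(f)$ \emph{in a neighborhood of the origin}, so after shrinking one may take $C^\circ\subseteq B\setminus\{\0\}$ outright and the local conical structure makes the identification immediate; there is no need to invoke $\pi_1$-surjectivity of an inclusion or to restrict to the homogeneous case. With that reading, your proof matches the intended one without caveat.
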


\medskip

The remainder of this paper is about improving the bound 
$$\operatorname{rank}\widetilde H^{n-s}(F_{f, \0};\Z)\leq \sum_C \mu^\circ_C$$
 from \thmref{thm:supersiersma}, that is, we want to show, for at least one $s$-dimensional component $C$ of $\Sigma f$,  that 
 $$
 \operatorname{rank}\operatorname{inv}\left\{\widetilde{\mathbf F}_{f,C^\circ}\right\}<\mu^\circ_C.
 $$
 Thus, we want to put a ``good'' bound on the geometric multiplicity of the eigenvalue $1$ for at least one monodromy action of the local system $\widetilde{\mathbf F}_{f,C^\circ}$; we accomplish this by putting bounds on the algebraic multiplicity of the eigenvalue $1$.

\medskip

\section{Homogeneous polynomials of arbitrary degree}

\smallskip

In this section, we assume that $f$ is a homogeneous polynomial  in $\C[z_0, \dots, z_n]$ of arbitrary degree $d$  such that $s:=\dim\Sigma f$ is also arbitrary. 

\medskip

We wish first to dispose of an easy, but not completely trivial, case of the more general results we will derive later.

\begin{prop}\label{prop:special}
Suppose that $f\in\C[z_0, \dots, z_n]$ is homogeneous of degree $d$ and $\dim\Sigma f=n$, so that $f$ is not reduced. Let
$$
f=\prod_{k=1}^{r} g_k^{a_k}
$$
be the irreducible factorization of $f$. Let $\mu^\circ_k$ denote the generic Milnor number of $f$ on $V(g_k)$.

Then, 
\begin{enumerate}
\item for each $k$, $\mu_k^\circ=a_k-1$, and the number of connected components of $F_{f, \0}$ is $\operatorname{gcd}\{a_k\}_k$;
\smallskip
\item  $\operatorname{rank} \widetilde H^0(F_{f, \0};\,\Z)=\operatorname{gcd}\{a_k\}_k-1\leq \sum_k(a_k-1)= \sum_k\mu_k^\circ$;
\smallskip

\item Let $S$ be a set of natural numbers such that, for all $m\in S$, $\operatorname{gcd}(m, d)=1$. Let $S-1:=\{m-1\,|\,m\in S\}$. Then, 
$$\operatorname{rank} \widetilde H^0(F_{f, \0};\,\Z)\leq \sum_{\mu_k^\circ\not\in S-1}\mu_k^\circ.$$
\end{enumerate}
\end{prop}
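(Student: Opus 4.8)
The plan is to prove part (1) first and then obtain (2) and (3) by essentially elementary bookkeeping. For the value $\mu_k^\circ$, I would compute locally at a generic smooth point $p$ of $V(g_k)$ — a point lying on no other $V(g_j)$, off the singular locus of $g_k$, and in the stratum $C^\circ$ for $C=V(g_k)$. Near such a $p$ one may choose analytic coordinates $(w_0,\dots,w_n)$ centered at $p$ with $g_k=w_0$, since $dg_k(p)\neq 0$; then $\prod_{j\neq k}g_j^{a_j}$ is a unit $u$ near $p$, so $f=u\,w_0^{a_k}$. Restricting to any normal slice $N$ to $V(g_k)$ at $p$ (a smooth curve transverse to $\{w_0=0\}$) makes $f_{|_N}$ equal to $w_0^{a_k}$ times a unit in one variable, hence right-equivalent to $w_0^{a_k}$, whose Milnor number is $a_k-1$. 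Thus $\mu_k^\circ=a_k-1$.

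For the number of connected components of $F_{f,\0}$ I would pass to the global Milnor fiber: since $f$ is homogeneous, $F_{f,\0}$ is diffeomorphic to the affine hypersurface $f^{-1}(1)\subseteq\C^{n+1}$, and it is classical that for homogeneous $f$ the map $f:\C^{n+1}\setminus V(f)\to\C^*$ is a locally trivial fibration with fiber $f^{-1}(1)$. As $\C^{n+1}\setminus V(f)$ is connected, the homotopy long exact sequence of this fibration gives
$$\pi_0\big(f^{-1}(1)\big)\;\cong\;\coker\big(f_*:\pi_1(\C^{n+1}\setminus V(f))\to\pi_1(\C^*)=\Z\big).$$
Because $\pi_1(\C^*)$ is abelian, $f_*$ factors through $H_1(\C^{n+1}\setminus V(f))$, which is freely generated by the meridians $\mu_1,\dots,\mu_r$ of the irreducible components $V(g_1),\dots,V(g_r)$ of $V(f)$; and since $g_k$ is reduced while the other factors are units near a generic point of $V(g_k)$, the loop $f\circ\mu_k$ has winding number $a_k$, i.e. $f_*(\mu_k)=a_k$. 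Hence $\operatorname{im}(f_*)=\gcd\{a_k\}_k\cdot\Z$, so $F_{f,\0}$ has exactly $\gcd\{a_k\}_k$ connected components, finishing (1).

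Part (2) is then immediate: the reduced cohomology $\widetilde H^0$ of a space with $c$ connected components is free of rank $c-1$, so $\operatorname{rank}\widetilde H^0(F_{f,\0};\Z)=\gcd\{a_k\}_k-1$; and since this gcd divides some $a_{k_0}$ while every $a_k-1\geq 0$, it is $\leq a_{k_0}-1\leq\sum_k(a_k-1)=\sum_k\mu_k^\circ$. For part (3), I would recall that $d=\sum_k a_k\deg g_k$ and set $g:=\gcd\{a_k\}_k$, then split into two cases. If every $a_k$ lies in $S$, then $g$ divides each $a_k$, hence divides $d$, hence divides $\gcd(a_k,d)=1$; so $g=1$ and both sides of the asserted inequality are $0$. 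If some $a_{k_0}\notin S$, then $a_{k_0}-1$ is one of the (nonnegative) summands on the right, and $g\mid a_{k_0}$ gives $\operatorname{rank}\widetilde H^0(F_{f,\0};\Z)=g-1\leq a_{k_0}-1\leq\sum_{\mu_k^\circ\notin S-1}\mu_k^\circ$.

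The only step here that is not purely formal is the component count in part (1), and even that rests on two classical facts — that $f:\C^{n+1}\setminus V(f)\to\C^*$ is a fibration for homogeneous $f$, and that the induced map on $H_1$ of the complement sends the $k$-th meridian to $a_k$ — which I would cite rather than reprove. So the closest thing to an obstacle is simply assembling these standard topological ingredients cleanly; the substance is the one-line normal-form computation of $\mu_k^\circ$ together with the elementary number theory of (2) and (3).
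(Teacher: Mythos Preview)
Your proof is correct and follows essentially the same route as the paper. The paper dispatches Item~1 by declaring $\mu_k^\circ=a_k-1$ ``trivial'' and citing Dimca for the component count, whereas you supply the actual local normal-form computation and the fibration/meridian argument; and in Item~3 you use the complementary case split (all $a_k\in S$ versus some $a_k\notin S$, rather than the paper's all $a_k\notin S$ versus some $a_k\in S$), but the underlying idea --- that a single $a_{k_0}$ coprime to $d$ forces $\gcd\{a_k\}=1$ --- is the same.
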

\begin{proof} That $\mu_k^\circ=a_k-1$ is trivial. That the number of connected components of $F_{f, \0}$ is $\operatorname{gcd}\{a_k\}_k$ follows at once from Dimca's Proposition 2.3 of \cite{dimcasing}. Item 2 is immediate from Item 1.

Item 3 requires a small argument. If, for all $k$, $\mu_k^\circ\not\in S-1$, then Item 3 is true by Item 2. Now suppose that there exists $\mu_{k_0}^\circ=a_{k_0}-1\in S-1$, so that $a_{k_0}=m$, where $m\in S$. Then, as $\operatorname{gcd}(m, d)=1$ and $\operatorname{gcd}\{a_k\}_k$ divides $d$, we must have $\operatorname{gcd}\{a_k\}_k=1$. Thus, by Item 2, $\operatorname{rank} \widetilde H^0(F_{f, \0};\,\Z)=0$ and the inequality in Item 3 holds (where, as usual, a summation over an empty indexing set is zero).
\end{proof}

\medskip

Now suppose that $\nu$ is an arbitrary complex line in $V(f)$ which contains the origin. Then $\nu$ is homeomorphic to a disk and so $\nu-\{\0\}$ has the homotopy-type of a circle, with a default ``counterclockwise'' orientation determined by the complex structure on $\nu-\{\0\}$. Let $p\in\nu-\{\0\}$ near $\0$. Then there is the internal monodromy automorphism $Q^{(k)}_{f, \nu}$ which acts on $\widetilde H^{k}(F_{f, p}; \Z)$; this automorphism is induced by leaving the value of $f$ fixed, but letting $p$ travel once counterclockwise around the punctured disk $\nu-\{\0\}$.

\begin{defn} Let $p\in\nu-\{\0\}$. Then, in each degree $k$, there is the corresponding {\bf internal monodromy} automorphism $Q^{(k)}_{f, \nu}$ which acts on $H^{k}(F_{f, p}; \Z)$; this automorphism is induced by leaving the value of $f$ fixed, but letting $p$ travel once counterclockwise around the punctured disk $\nu-\{\0\}$.
\end{defn}

\medskip

We have the following fundamental relationship between our two monodromies. In fact,  in \cite{pervdual}, we derived this relation in the case of central hyperplane arrangements in $\C^3$ by blowing up the origin. However, the result is more generally true for arbitrary homogeneous polynomials, with essentially the same proof, and there was never any need to blow up. While we are in a different setting, the proof that we give is that of Steenbrink in 6.2 of \cite{steenbrink}; we give the proof because it is short and to make it clear that the result holds in our setting.

\begin{thm}\label{thm:relmono} \textnormal{ (Steenbrink) } Suppose $f$ is homogeneous of degree $d$. Let $\nu\subseteq V(f)$ be a complex line through the origin and let $p\in\nu-\{\0\}$. 
Then 
$$
Q^{(k)}_{f,\nu}=\Big(T^{(k)}_{f, p}\Big)^{-d}.
$$
\end{thm}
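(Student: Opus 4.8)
The plan is to exploit the $\C^*$-action coming from homogeneity to build a global family over a punctured disk that simultaneously realizes both monodromies. Since $f$ is homogeneous of degree $d$, we have $f(t\cdot z)=t^d f(z)$ for all $t\in\C^*$. Fix a point $p\in\nu-\{\0\}$ near the origin with $f(p)=v\neq 0$ small. The idea is to track what happens to the Milnor fiber $F_{f,p}$ as we move the parameter $t$ once counterclockwise around the unit circle: on the one hand, since $t\cdot p$ traverses the punctured line $\nu-\{\0\}$ (possibly up to reparametrization of the loop), the induced automorphism on $\widetilde H^k(F_{f,p};\Z)$ is the internal monodromy $Q^{(k)}_{f,\nu}$; on the other hand, since $f(t\cdot p)=t^d v$, the value of $f$ winds $d$ times around the origin, so the induced automorphism is $(T^{(k)}_{f,p})^{d}$ — or its inverse, depending on orientation conventions.

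First I would set up the precise geometric family. Consider the map $\Phi\colon \C^*\times F_{f,p}\to \C^{n+1}$ given by $(t,z)\mapsto t\cdot z$; for $t$ on the unit circle and $z$ in a suitable representative of the Milnor fiber of $f$ at $p$, this lands in the Milnor fiber of $f$ at $t\cdot p$ with value $t^d v$. One has to check that $t\cdot F_{f,p}$ is (canonically identified with) the Milnor fiber $F_{f,t\cdot p}$; this is where homogeneity is used essentially, via the compatibility of the Milnor fibrations at $p$ and at $t\cdot p$ under scaling, and it is the step I expect to require the most care. Concretely, I would use that multiplication by $t$ is a biholomorphism of $\C^{n+1}$ carrying the Milnor ball/fiber data at $p$ to that at $t\cdot p$, matching the fibration $f$ with the fibration $t^{-d}f$, and that the latter is the same fibration up to a rotation of the target. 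Then, restricting $\Phi$ to the loop $t=e^{2\pi i\theta}$, $\theta\in[0,1]$, produces a locally trivial fibration over $S^1$ whose geometric monodromy, read off at $\theta=0$ and $\theta=1$, is the transport automorphism of $\widetilde H^k$ along the loop.

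Next I would identify the two readings of this monodromy. Projecting the loop $\theta\mapsto e^{2\pi i\theta}\cdot p$ into $\nu-\{\0\}$, it is homotopic (rel basepoint) to the standard generator of $\pi_1(\nu-\{\0\})$ traversed once, so the transport automorphism is by definition $Q^{(k)}_{f,\nu}$ (with the default counterclockwise orientation coming from the complex structure on $\nu$). Projecting instead via $f$, the loop $\theta\mapsto f(e^{2\pi i\theta}\cdot p)=e^{2\pi i d\theta}v$ wraps $d$ times around $0\in\C$, and parallel transport along this is $(T^{(k)}_{f,p})^{d}$ up to sign in the exponent. Because geometric monodromy does not depend on which base we use — the total space and the transport are the same — these two automorphisms coincide, giving $Q^{(k)}_{f,\nu}=(T^{(k)}_{f,p})^{\pm d}$. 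Finally I would pin down the sign by a careful orientation bookkeeping: the internal monodromy is defined by letting $p$ move while the value of $f$ is \emph{fixed}, whereas scaling by $t$ on the unit circle moves both $p$ and the value simultaneously; decomposing the scaling loop as ``move $p$ around $\nu-\{\0\}$'' followed by ``undo the induced winding of the $f$-value'' shows that the contribution of the $f$-value winding must be inverted, yielding $Q^{(k)}_{f,\nu}=(T^{(k)}_{f,p})^{-d}$ as claimed. The main obstacle is the identification $t\cdot F_{f,p}\cong F_{f,t\cdot p}$ uniformly in $t$ along the loop, i.e. producing an honest trivialization of the family rather than a pointwise statement; once that is in hand, the rest is orientation- and homotopy-bookkeeping.
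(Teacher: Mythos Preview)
Your approach is essentially the paper's: both exploit the identity $f(tz)=t^d f(z)$ to see that moving $p$ once around $\nu-\{\0\}$ has the same effect on the Milnor fiber as winding the value of $f$ through $-d$ turns. One slip: you write ``$f(p)=v\neq 0$'', but $p\in\nu\subseteq V(f)$ forces $f(p)=0$; what you mean is that the Milnor fiber is taken over a nearby regular value $v$, and it is this fiber value (not $f(t\cdot p)$, which is identically $0$) that becomes $t^d v$ under scaling. The paper dissolves what you flag as the ``main obstacle'' by passing to a normal slice and using the coordinates $u_0=z_0$, $u_j=z_j/z_0$ (taking $\nu$ to be the $z_0$-axis), so that $f=u_0^d\,f(1,u_1,\dots,u_n)$; then the Milnor fiber at $u_0=e^{i\omega}$ over the value $r$ is literally the set $\{f(1,u_1,\dots,u_n)=re^{-id\omega}\}$ in fixed coordinates, and both the uniform identification $t\cdot F_{f,p}\cong F_{f,t\cdot p}$ and the sign in the exponent fall out with no further work.
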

\begin{proof} We assume without loss of generality that $\nu$ is the $z_0$-axis, and do what one would do if blowing up the origin, without actually blowing up the origin. 

\smallskip

 As in the introduction, if  $N$ is a normal slice to $\nu$ at $p\in\nu-\{\0\}$, $F_{f, p}$ is homeomorphic to the product of a disk with $F_{f_{|_N}, p}$ (but $f_{|_N}$ need no longer have an isolated critical point at $p$). Thus,
$$
H^{*}(F_{f,p};\,\Z)\cong  H^{*}(F_{f_{|_N}, p};\,\Z)
$$
and so both the Milnor monodromy and the internal monodromy can be viewed as acting on $H^{*}(F_{f_{|_N}, p};\,\Z)$.

As coordinates on $\C^{n+1}-V(z_0)$, we use $u_0:=z_0$ and, for $1\leq j\leq n$,  $u_j:=z_j/z_0$. Thus, on $\C^{n+1}-V(z_0)$, our polynomial $f(z_0, \dots, z_n)$ becomes $u_0^d\,f(1, u_1, u_2, \dots, u_n)$.

We select a value $t\in \C^*$; without loss of generality, we fix $u_0=z_0=t=1$. Let $r, \epsilon\in\R$ be such that $0<r\ll\epsilon\ll1$ so that the Milnor fiber of $f_1$ at $\0$ is 
$$
B^\circ_\epsilon\cap f_1^{-1}(r)=\{(u_1,\dots, u_n)\,\big|\, |u_1|^2+\cdots+|u_n|^2<\epsilon^2, \, (1)^df(1, u_1,\dots, u_n)=r\},
$$
where $B_\epsilon^\circ$ now denotes an open ball of radius $\epsilon$ centered at the origin in $\C^n\cong \{1\}\times \C^n$, where we define a ball using the coordinates $u_1, \dots, u_n$ (which is also an open ball of radius $\epsilon$ centered at the origin in $\C^n$ using the coordinates $z_1, \dots, z_n$).

The Milnor monodromy is induced by following the fiber
$$
B^\circ_\epsilon\cap f_1^{-1}(r)=\{(u_1,\dots, u_n)\,\big|\, |u_1|^2+\cdots+|u_n|^2<\epsilon^2, \, (1)^df(1, u_1,\dots, u_n)=re^{i\theta}\}
$$
as $\theta$ goes from $0$ to $2\pi$.

The internal monodromy is induced by following the fiber
$$
B^\circ_\epsilon\cap f_{e^{i\omega}}^{-1}(r)=\{(u_1,\dots, u_n)\,\big|\, |u_1|^2+\cdots+|u_n|^2<\epsilon^2, \, (e^{i\omega})^df(1, u_1,\dots, u_n)=r\}
$$
as $\omega$ goes from $0$ to $2\pi$. The equality in the set can be rewritten as
$$
f(1, u_1,\dots, u_n)=re^{-id\omega}
$$
and the result follows.
\end{proof}

\smallskip

From the Monodromy Theorem and \thmref{thm:relmono},  we immediately conclude that:

\begin{cor}\label{cor:unity} Suppose $f$ is homogeneous. Let $\nu\subseteq V(f)$ be a complex line through the origin. Then all of the eigenvalues of $Q^{(k)}_{f,\nu}$ are roots of unity.
\end{cor}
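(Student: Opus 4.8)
The plan is to read this off directly from the two results just cited, with no extra ingredients. First I would fix a point $p\in\nu-\{\0\}$ near $\0$ and invoke \thmref{thm:relmono}: since $f$ is homogeneous, say of degree $d$, and $\nu\subseteq V(f)$ is a complex line through the origin, that theorem gives $Q^{(k)}_{f,\nu}=\big(T^{(k)}_{f, p}\big)^{-d}$ as automorphisms of the free part of $H^k(F_{f, p};\Z)$. In particular the eigenvalues of $Q^{(k)}_{f,\nu}$ are precisely the numbers $\lambda^{-d}$, where $\lambda$ ranges over the eigenvalues, with multiplicity, of $T^{(k)}_{f, p}$; this uses only the elementary fact that the eigenvalues of an integer power of an invertible matrix are the corresponding integer powers of the eigenvalues, and $T^{(k)}_{f, p}$ is invertible because it is represented by an invertible integral matrix.

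Next I would apply the Monodromy Theorem, which asserts that $T^{(k)}_{f, p}$ is quasi-unipotent, i.e., each such $\lambda$ is a root of unity. Since any integer power, in particular the $(-d)$-th power, of a root of unity is again a root of unity, every eigenvalue $\lambda^{-d}$ of $Q^{(k)}_{f,\nu}$ is a root of unity, which is exactly the assertion of the corollary.

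There is essentially no obstacle here: the content is entirely contained in \thmref{thm:relmono} and the Monodromy Theorem, and the only point requiring mild care is the bookkeeping convention, already fixed earlier in the paper, that all eigenvalue and characteristic-polynomial statements refer to the automorphism induced on $H^k(F_{f, p};\Z)$ modulo its torsion submodule. Both cited statements respect that convention, so the short argument above goes through verbatim.
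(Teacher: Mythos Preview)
Your proposal is correct and follows exactly the paper's approach: the corollary is stated as an immediate consequence of \thmref{thm:relmono} and the Monodromy Theorem, with no additional argument given. Your write-up simply spells out the one-line deduction the paper leaves implicit.
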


\medskip

Now \thmref{thm:supersiersma} tells us that
$$\operatorname{rank}\widetilde H^{n-s}(F_{f, \0};\Z)\leq \sum_{C}\operatorname{rank}\left\{\operatorname{inv}\left\{\widetilde{\mathbf F}_{f,C^\circ}\right\}\right\},$$
where, for all $C$, $\operatorname{rank}\left\{\operatorname{inv}\left\{\widetilde{\mathbf F}_{f,C^\circ}\right\}\right\}\leq\mu^\circ_C$. If $C$ is an $s$-dimensional component of $\Sigma f$ and we have a line $\nu\subseteq C^\circ$, then 
$$\operatorname{inv}\left\{\widetilde{\mathbf F}_{f,C^\circ}\right\}\subseteq\ker\big\{\operatorname{id} - Q^{(n-s)}_{f,\nu}\big\}
$$
and so any non-trivial upper bound on the rank of the eigenspace of $1$ for $Q^{(n-s)}_{f,\nu}$ will give us an improved bound on $\operatorname{rank}\widetilde H^{n-s}(F_{f, \0};\Z)$.

\medskip

In the next corollary, we obtain such bounds in special cases. We should remark that in the $n-s=0$ case, Item 3 of \propref{prop:special} gives us the inequalities on $\operatorname{rank}\widetilde H^{0}(F_{f, \0};\Z)$ that we conclude from the corollary below when $n-s\geq 1$.

\medskip

\begin{cor}\label{cor:special} Suppose that $f\in \C[z_0, \dots, z_n]$ is homogeneous of degree $d$ and let $s$ denote $\dim \Sigma f$. Suppose that $n-s\geq 1$ and $s\geq 1$.  Let $C$ be an $s$-dimensional component of $\Sigma f$.

\smallskip

Then,
\begin{enumerate}
\item
\smallskip 
Suppose that $\mu^\circ_C=1$, $d$ is odd, and $n-s$ is even. Then $\operatorname{rank}\left\{\operatorname{inv}\left\{\widetilde{\mathbf F}_{f,C^\circ}\right\}\right\}=0$.
\item \smallskip
Suppose that $\mu^\circ_C=2$, $3\not| d$, and $n-s$ is even. Then $\operatorname{rank}\left\{\operatorname{inv}\left\{\widetilde{\mathbf F}_{f,C^\circ}\right\}\right\}=0$.
\item \smallskip Suppose that $\mu^\circ_C=2$, $6\not| d$, and $n-s$ is odd.  Then $\operatorname{rank}\left\{\operatorname{inv}\left\{\widetilde{\mathbf F}_{f,C^\circ}\right\}\right\}=0$.
\item \smallskip Suppose that $\mu^\circ_C=3$, $d$ is odd, and $n-s$ is even.  Then $\operatorname{rank}\left\{\operatorname{inv}\left\{\widetilde{\mathbf F}_{f,C^\circ}\right\}\right\}\leq 1$.
\item \smallskip Suppose that $\mu^\circ_C=3$, $d$ is odd, and $n-s$ is odd.  Then $\operatorname{rank}\left\{\operatorname{inv}\left\{\widetilde{\mathbf F}_{f,C^\circ}\right\}\right\}\leq 2$.

\end{enumerate}
\end{cor}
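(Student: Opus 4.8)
The plan is to work with a single line $\nu \subseteq C^\circ$ and the internal monodromy $Q := Q^{(n-s)}_{f,\nu}$ acting on the stalk $\widetilde H^{n-s}(F_{f,p};\Z) \cong \Z^{\mu^\circ_C}$. Since $\operatorname{inv}\{\widetilde{\mathbf F}_{f,C^\circ}\} \subseteq \ker\{\operatorname{id} - Q\}$, it suffices to bound the geometric multiplicity of the eigenvalue $1$ of $Q$, and for that it suffices to bound its algebraic multiplicity. By \thmref{thm:relmono}, $Q = (T^{(n-s)}_{f,p})^{-d}$, where $T := T^{(n-s)}_{f,p}$ is the Milnor monodromy at the generic point $p \in \nu - \{\0\}$. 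The key observation is that since $f_{|_N}$ has an isolated critical point at $p$, $T$ is the monodromy of an isolated hypersurface singularity, so (i) its characteristic polynomial $\Delta(t)$ has degree exactly $\mu := \mu^\circ_C$, (ii) by the Monodromy Theorem all eigenvalues of $T$ are roots of unity, and (iii) by \corref{cor:inversechar}, $\Delta$ is palindromic (equivalently, $T$ and $T^{-1}$ are conjugate over $\C$), so eigenvalues come in reciprocal pairs together with possible $\pm 1$'s. The eigenvalue $1$ of $Q$ corresponds exactly to eigenvalues $\zeta$ of $T$ with $\zeta^d = 1$, i.e.\ to $d$-th roots of unity among the eigenvalues of $T$.

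The remaining input is A'Campo's Theorem (\thmref{thm:acampo}): the Lefschetz number $\mathfrak L\{T^{(*)}_{f,p}\} = 0$. Because $F_{f,p}$ is homotopy-equivalent to $F_{f_{|_N},p}$ with cohomology concentrated in degrees $0$ and $n-s$ (reduced cohomology only in degree $n-s$), the Lefschetz number unwinds to $1 + (-1)^{n-s}\operatorname{tr}(T) = 0$, hence $\operatorname{tr}(T) = -(-1)^{n-s} = (-1)^{n+1-s}$. So I have three constraints on the multiset of eigenvalues $\{\zeta_1, \dots, \zeta_\mu\}$ of $T$: each $\zeta_i$ is a root of unity; the multiset is closed under $\zeta \mapsto \zeta^{-1}$; and $\sum \zeta_i = (-1)^{n+1-s}$. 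Under the divisibility hypothesis on $d$ in each case (e.g.\ $d$ odd forces $\zeta^d = 1 \Leftrightarrow \zeta$ an odd-order root of unity, and for $\mu = 1$ combined with the reciprocal-pair structure the single eigenvalue must be $\pm 1$), I then do a finite case analysis of which multisets of the given small cardinality $\mu \in \{1,2,3\}$ satisfy all three constraints, and count how many of the $\zeta_i$ can be $d$-th roots of unity. For instance, in (1): $\mu = 1$ forces $\zeta_1 = \pm 1$; $\operatorname{tr}(T) = \zeta_1 = (-1)^{n+1-s} = -1$ since $n-s$ is even, so $\zeta_1 = -1$; since $d$ is odd, $(-1)^d = -1 \neq 1$, so $Q = \zeta_1^{-d} = -1$ has no eigenvalue $1$ and $\operatorname{rank}\{\operatorname{inv}\} = 0$. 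The cases (2)–(5) are parallel but require enumerating reciprocal-closed eigenvalue multisets of size $2$ or $3$: size $2$ is $\{\zeta, \zeta^{-1}\}$ or $\{\pm 1, \pm 1\}$, size $3$ is $\{\zeta, \zeta^{-1}, \pm 1\}$ or $\{\pm1,\pm1,\pm1\}$, and then the trace condition together with "$\zeta$ not a $d$-th root of unity" (which follows from $3 \nmid d$, resp.\ $6 \nmid d$, ruling out primitive cube/sixth roots as $d$-th roots of unity) pins down the count of eigenvalue-$1$'s of $Q$.

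I expect the main obstacle to be the case analysis in (4) and (5) with $\mu = 3$: here the reciprocal structure forces one eigenvalue to be $\pm 1$ and the other two to be a reciprocal pair $\{\zeta, \zeta^{-1}\}$, and I must check, using the trace value $(-1)^{n+1-s}$ and the constraint that $\zeta$ is a root of unity whose value of $\zeta + \zeta^{-1}$ is a specific integer, exactly which configurations have $\zeta^d = 1$. The subtlety is that $\zeta + \zeta^{-1} \in \{-2,-1,0,1,2\} \cap \Z$ (since the trace and the $\pm1$-eigenvalue are integers), which forces $\zeta$ to have order in $\{1,2,3,4,6\}$; combined with the divisibility hypothesis on $d$ one excludes the orders dividing $d$ except possibly order $1$ or $2$, leaving at most one (in case (4), $n-s$ even) or two (in case (5), $n-s$ odd) eigenvalues equal to a $d$-th root of unity, hence the bounds $\leq 1$ and $\leq 2$. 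The bookkeeping of signs of $(-1)^{n-s}$ versus $(-1)^{n+1-s}$ and the parity of $d$ in each branch is where care is needed, but no deep idea beyond the three stated constraints is required.
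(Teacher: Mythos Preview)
Your proposal is correct and follows essentially the same strategy as the paper: pick a line $\nu\subseteq C^\circ$, use $Q^{(n-s)}_{f,\nu}=(T^{(n-s)}_{f,p})^{-d}$ together with the Monodromy Theorem and A'Campo's trace constraint $\operatorname{tr}T=(-1)^{n+1-s}$, and then do a finite enumeration of the possible eigenvalue multisets for $\mu^\circ_C\in\{1,2,3\}$. In fact your $\mu=3$ analysis is slightly more thorough than the paper's---the paper asserts that a non-$\pm1$ eigenvalue is impossible, overlooking the configuration $\{i,-i,\pm1\}$, whereas your argument (via $\zeta+\zeta^{-1}\in\Z$ forcing order $\in\{1,2,3,4,6\}$, with orders $3,6$ then ruled out by the trace) handles this case as well and still yields the stated bounds.
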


\begin{proof} Let $\nu$ be a complex line in $C^\circ$, and let $p\in\nu-\{\0\}$. Of course we will use that
$$\operatorname{inv}\left\{\widetilde{\mathbf F}_{f,C^\circ}\right\}\subseteq\ker\big\{\operatorname{id} - Q^{(n-s)}_{f,\nu}\big\}.
$$

 As $n-s\geq 1$, $\widetilde H^{n-s}(F_{f, p};\,\Z)=H^{n-s}(F_{f, p};\,\Z)$. Recall also that the eigenvalues  (with multiplicities) of $T^{(n-s)}_{f, p}$ are roots of unity which -- for eigenvalues other than $\pm1$ -- must occur in complex conjugate pairs. As a final note before we look at the cases, observe that A'Campo's theorem tells us that the sum of the eigenvalues (with multiplicities) of $T^{(n-s)}_{f, p}$ is $-1$ if $(n-s)$ is even, and is $1$ if $(n-s)$ is odd.

\medskip

\noindent Item 1:  Suppose that $d$ is odd, $n-s$ is even, and $\mu^\circ_C=1$.  Then, since $\mu^\circ_\nu=1$ and $(n-s)$ is even, $T^{(n-s)}_{f, p}$ must be multiplication by $-1$. Now, since $d$ is odd,  \thmref{thm:relmono} tells us that $Q^{(n-s)}_{f, \nu}$ must also be multiplication by $-1$. Thus, $\operatorname{ker}\big\{\operatorname{id}-Q^{(n-s)}_{f, \nu}\big\}=0$ and we are finished.

\bigskip

\noindent Item 2:  Suppose that $n-s$ is even and $\mu^\circ_C=2$.  Then the two eigenvalues of $T^{(n-s)}_{f, p}$  must be
$$
-\frac{1}{2}+\frac{\sqrt{3}}{2}\,i=e^{2\pi i/3}\hskip 0.2in\textnormal{and} \hskip 0.2in-\frac{1}{2}-\frac{\sqrt{3}}{2}\,i=e^{-2\pi i/3}.
$$
Now, since $3\not| d$ and $Q^{(n-s)}_{f,\nu}=\Big(T^{(n-s)}_{f, p}\Big)^{-d}$, we conclude that $1$ is not an eigenvalue of $Q^{(n-s)}_{f,\nu}$, i.e., $\operatorname{ker}\{\operatorname{id} - Q^{(n-s)}_{f,\nu}\}=0$, and we are finished.

\bigskip

\noindent  Item 3:  Suppose that $n-s$ is odd and $\mu^\circ_C=2$. Then the two eigenvalues of $T^{(n-s)}_{f, p}$ must be
$$
\frac{1}{2}+\frac{\sqrt{3}}{2}\,i=e^{\pi i/3}\hskip 0.2in\textnormal{and}\hskip 0.2in \frac{1}{2}-\frac{\sqrt{3}}{2}\,i=e^{-\pi i/3}.
$$
Since $6\not| d$ and $Q^{(n-s)}_{f,\nu}=\Big(T^{(n-s)}_{f, p}\Big)^{-d}$, we conclude that $1$ is not an eigenvalue of $Q^{(n-s)}_{f,\nu}$, i.e., $\operatorname{ker}\{\operatorname{id} - Q^{(n-s)}_{f,\nu}\}=0$, and we are finished.

\bigskip

\noindent Items 4 and 5: If one of the eigenvalues of $T^{(n-s)}_{f, p}$ is not $\pm1$, then its complex conjugate must also be an eigenvalue; this would contradict that the eigenvalues are roots of unity and that sum of the eigenvalues is $\pm1$. Thus, if $(n-s)$ is even, the eigenvalues with multiplicities must be $-1$, $-1$,  $1$ while, if $(n-s)$ is odd, the eigenvalues must be $-1$, $1$, $1$. The stated conclusions follow.
\end{proof}

\medskip

\begin{exm}\label{exm:good} Let $f(x,y,r,t)=r^2y^2-tx^3$. So, $n=3$ and $d=4$. We find
$$
\Sigma f=V(tx^3, r^2y, ry^2, x^4) =V(x,y)\cup V(x, r) =: C_1\cup C_2.
$$
Thus $s=2$ and $n-s=1$ is odd.

\smallskip

To calculate $\mu_{C_1}^\circ$, we let $f_{r_0, t_0}=r_0^2y^2-t_0x^3$, and calculate $\mu_\0(f_{r_0, t_0})$ for generic values of $r_0$ and $t_0$. We find $\mu_{C_1}^\circ=2$.

\smallskip

To calculate $\mu_{C_2}^\circ$, we let $f_{y_0, t_0}=r^2y_0^2-t_0x^3$, and calculate $\mu_\0(f_{y_0, t_0})$ for generic values of $y_0$ and $t_0$. We find $\mu_{C_2}^\circ=2$.

\smallskip

Therefore, both $C_1$ and $C_2$ are the case covered by Item 3 of \corref{cor:special}. Combining this with \thmref{thm:supersiersma} tells us that $H^1(F_{f, \0};\, \Z)=0$.

\end{exm}

\medskip

\section{Homogeneous polynomials of prime power degree}

We continue to assume that $f$ is a homogeneous polynomial in $\C[z_0, \dots, z_n]$ of arbitrary degree $d$, and $s:=\dim \Sigma f$. Let $p\in\Sigma f$.

\smallskip

\thmref{thm:relmono} tells us that, for all $k$, $Q^{(k)}_{f,\nu}=\Big(T^{(k)}_{f, p}\Big)^{-d}$, while the theorem of A'Campo, \thmref{thm:acampo}, tells us that  $\mathfrak L\big\{T^{(*)}_{f, p}\big\}=0$. Does this tell us anything about the Lefschetz number $\mathfrak L\big\{Q^{(k)}_{f,\nu}\big\}$?

\smallskip

The answer is ``yes'', provided that we assume that the degree $d$ is a prime power.

\medskip

\begin{thm}\label{thm:lef} Suppose that $f$ is homogeneous of degree $\mathfrak p^m$, where $\mathfrak p$ is prime. Suppose that $\nu$ is a complex line in $V(f)$ which contains the origin, and let $p\in \nu-\{\0\}$. Then, in $\Z/\mathfrak p\Z[x]$, there is an equality of characteristic polynomials
$$
\operatorname{char}_{Q^{(k)}_{f,\nu}}(x) = \operatorname{char}_{T^{(k)}_{f,p}}(x) .
$$

In particular, for all $k$, the trace of $Q^{(k)}_{f,\nu}$ is congruent modulo $\mathfrak p$ to the trace of $T^{(k)}_{f, p}$. 
\end{thm}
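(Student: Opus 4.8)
The plan is to reduce the statement to a purely algebraic fact about a single invertible integer matrix $A$ (representing $T^{(k)}_{f,p}$ on the free part of $H^k(F_{f,p};\Z)$) together with the relation $Q^{(k)}_{f,\nu}=A^{-d}$ from \thmref{thm:relmono} with $d=\mathfrak p^m$. Concretely, I would show: if $A\in GL_N(\Z)$ is quasi-unipotent (all eigenvalues roots of unity, by the Monodromy Theorem), then $\operatorname{char}_{A^{\mathfrak p^m}}(x)\equiv\operatorname{char}_A(x)\pmod{\mathfrak p}$ in $\Z/\mathfrak p\Z[x]$. Since $Q^{(k)}_{f,\nu}=A^{-d}$ and, by \corref{cor:inversechar}, $\operatorname{char}_{A^{-1}}=\operatorname{char}_A$ over $\Z$ (hence over $\Z/\mathfrak p\Z$), applying the claim to $A^{-1}$ in place of $A$ gives $\operatorname{char}_{Q^{(k)}_{f,\nu}}(x)=\operatorname{char}_{(A^{-1})^{\mathfrak p^m}}(x)\equiv\operatorname{char}_{A^{-1}}(x)=\operatorname{char}_{A}(x)=\operatorname{char}_{T^{(k)}_{f,p}}(x)\pmod{\mathfrak p}$. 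The statement about traces is then immediate, since the trace is (up to sign) the coefficient of $x^{N-1}$ in the characteristic polynomial.

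For the algebraic claim, the cleanest route uses the Frobenius/freshman's-dream identity over $\F_{\mathfrak p}$. Work in $\F_{\mathfrak p}=\Z/\mathfrak p\Z$ and let $\bar A$ be the reduction of $A$. I would first handle $m=1$: by the Cayley--Hamilton approach or, more directly, by passing to an algebraic closure $\overline{\F_{\mathfrak p}}$ and using that the Frobenius map $y\mapsto y^{\mathfrak p}$ is a field automorphism there. If $\lambda_1,\dots,\lambda_N\in\overline{\F_{\mathfrak p}}$ are the eigenvalues of $\bar A$ with multiplicity, then the eigenvalues of $\bar A^{\mathfrak p}$ are $\lambda_1^{\mathfrak p},\dots,\lambda_N^{\mathfrak p}$, so $\operatorname{char}_{\bar A^{\mathfrak p}}(x)=\prod_i(x-\lambda_i^{\mathfrak p})$; on the other hand, since the coefficients of $\operatorname{char}_{\bar A}(x)=\prod_i(x-\lambda_i)$ lie in $\F_{\mathfrak p}$, raising that polynomial's identity through Frobenius gives $\operatorname{char}_{\bar A}(x^{\mathfrak p}) = \operatorname{char}_{\bar A}(x)^{\mathfrak p} = \prod_i(x-\lambda_i)^{\mathfrak p}=\prod_i(x^{\mathfrak p}-\lambda_i^{\mathfrak p})$, i.e. $\prod_i(x-\lambda_i^{\mathfrak p})=\operatorname{char}_{\bar A}(x)$ after the substitution $x\mapsto x^{1/\mathfrak p}$ is made precise by comparing the two monic degree-$N$ polynomials $\operatorname{char}_{\bar A^{\mathfrak p}}(x)$ and $\operatorname{char}_{\bar A}(x)$ whose $\mathfrak p$-th powers agree. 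The general case $m\ge 1$ follows by iterating: $\operatorname{char}_{\bar A^{\mathfrak p^m}}(x)=\operatorname{char}_{(\bar A^{\mathfrak p^{m-1}})^{\mathfrak p}}(x)=\operatorname{char}_{\bar A^{\mathfrak p^{m-1}}}(x)=\cdots=\operatorname{char}_{\bar A}(x)$ by induction on $m$.

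The one genuine subtlety — and the step I expect to require the most care — is that the claim is \emph{false} without quasi-unipotence: e.g. over $\F_{\mathfrak p}$ a generic matrix does not satisfy $\operatorname{char}_{A^{\mathfrak p}}=\operatorname{char}_A$. So I must make sure the Frobenius argument is applied correctly: the key point that rescues it in general is simply that $\operatorname{char}_{\bar A}\in\F_{\mathfrak p}[x]$, so $\operatorname{char}_{\bar A}(x)^{\mathfrak p}=\operatorname{char}_{\bar A}(x^{\mathfrak p})$ in $\F_{\mathfrak p}[x]$, which combined with $\operatorname{char}_{\bar A^{\mathfrak p}}(x)^{\mathfrak p}=\prod_i(x-\lambda_i^{\mathfrak p})^{\mathfrak p}=\prod_i(x^{\mathfrak p}-\lambda_i^{\mathfrak p})=\operatorname{char}_{\bar A}(x^{\mathfrak p})$ shows $\operatorname{char}_{\bar A^{\mathfrak p}}(x)^{\mathfrak p}=\operatorname{char}_{\bar A}(x)^{\mathfrak p}$; since $y\mapsto y^{\mathfrak p}$ is injective on $\F_{\mathfrak p}[x]$, we conclude $\operatorname{char}_{\bar A^{\mathfrak p}}(x)=\operatorname{char}_{\bar A}(x)$. (So in fact quasi-unipotence is not even needed for this identity — it holds for any integer matrix — though it is convenient to have it available; I would double-check this before deciding how much to invoke.) With that identity in hand over $\F_{\mathfrak p}$, the translation back to $Q^{(k)}_{f,\nu}$ and $T^{(k)}_{f,p}$ via \thmref{thm:relmono} and \corref{cor:inversechar} is routine, and the trace congruence falls out by reading off the appropriate coefficient.
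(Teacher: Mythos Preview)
Your approach is essentially the paper's: reduce to an integer matrix $A$, prove $\operatorname{char}_{A^{\mathfrak p}}(x)\equiv\operatorname{char}_A(x)\pmod{\mathfrak p}$ via the Frobenius/freshman's-dream identity, iterate to handle $\mathfrak p^m$, and then combine \thmref{thm:relmono} with \corref{cor:inversechar}. The only cosmetic difference is that the paper applies the freshman's dream at the matrix level, writing $(xI-A)^{\mathfrak p}\equiv x^{\mathfrak p}I-A^{\mathfrak p}$ and taking determinants, whereas you pass to eigenvalues over $\overline{\F_{\mathfrak p}}$.

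Two small points to clean up. First, your displayed chain $\operatorname{char}_{\bar A^{\mathfrak p}}(x)^{\mathfrak p}=\prod_i(x-\lambda_i^{\mathfrak p})^{\mathfrak p}=\prod_i(x^{\mathfrak p}-\lambda_i^{\mathfrak p})$ is miswritten: $(x-\lambda_i^{\mathfrak p})^{\mathfrak p}=x^{\mathfrak p}-\lambda_i^{\mathfrak p^2}$, and $\prod_i(x^{\mathfrak p}-\lambda_i^{\mathfrak p})$ is $\operatorname{char}_{\bar A^{\mathfrak p}}(x^{\mathfrak p})$, not $\operatorname{char}_{\bar A}(x^{\mathfrak p})$. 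The intended computation is $\operatorname{char}_{\bar A}(x)^{\mathfrak p}=\prod_i(x-\lambda_i)^{\mathfrak p}=\prod_i(x^{\mathfrak p}-\lambda_i^{\mathfrak p})=\operatorname{char}_{\bar A^{\mathfrak p}}(x^{\mathfrak p})$, which together with $\operatorname{char}_{\bar A}(x)^{\mathfrak p}=\operatorname{char}_{\bar A}(x^{\mathfrak p})$ gives the result after substituting $x$ for $x^{\mathfrak p}$; this is exactly the paper's argument. Second, your initial worry that ``the claim is false without quasi-unipotence'' is unfounded, as you yourself conclude: the identity $\operatorname{char}_{A^{\mathfrak p}}\equiv\operatorname{char}_A\pmod{\mathfrak p}$ holds for \emph{any} integer matrix. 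Quasi-unipotence enters only through \corref{cor:inversechar}, to identify $\operatorname{char}_{A^{-1}}$ with $\operatorname{char}_A$ over $\Z$.
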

\begin{proof} We let $A$ denote an integer matrix which represents the action of $\Big(T^{(k)}_{f, p}\Big)^{-1}$ on the free part of $H^k(F_{f, p};\,\Z)$. Then, we believe that it is well-known that, in $(\Z/\mathfrak p\Z)[x]$, the characteristic polynomial $\operatorname{char}_A(x)$ equals $\operatorname{char}_{A^{\mathfrak p^m}}(x)$; however, since the proof is short, we give it.

By the standard argument on  binomial coefficients, we have the following equality modulo $\mathfrak p$:
$$
(xI-A)^{\mathfrak p} = x^{\mathfrak p}I-A^{\mathfrak p}.
$$
Taking determinants, we find that in $\Z/\mathfrak p\Z[x]$, 
$$(\operatorname{char}_A(x))^{\mathfrak p}=\operatorname{char}_{A^{\mathfrak p}}(x^{\mathfrak p}).
$$
By the standard argument on binomial/multinomial coefficients and Fermat's Little Theorem, 
$$(\operatorname{char}_A(x))^{\mathfrak p}=\operatorname{char}_A(x^{\mathfrak p}).
$$
Putting the last two equalities together, we obtain
$\operatorname{char}_A(x^{\mathfrak p})=\operatorname{char}_{A^{\mathfrak p}}(x^{\mathfrak p})$, but this implies that $\operatorname{char}_A(x)=\operatorname{char}_{A^{\mathfrak p}}(x)$.

Iterating this tells us that, in $(\Z/\mathfrak p\Z)[x]$,
$\operatorname{char}_A(x)=\operatorname{char}_{A^{\mathfrak p^m}}(x)$, that is,
$$
\operatorname{char}_{\big(T^{(k)}_{f, p}\big)^{-1}}(x)=\operatorname{char}_{{\big(T^{(k)}_{f, p}\big)^{-\mathfrak p^m}}}(x) = \operatorname{char}_{Q^{(k)}_{f, \nu}}(x),
$$
where the last equality is by \thmref{thm:relmono}. Finally, \corref{cor:inversechar} says that $\operatorname{char}_{\big(T^{(k)}_{f, p}\big)^{-1}}(x)$ is equal to $\operatorname{char}_{T^{(k)}_{f, p}}(x)$, and we are finished.
\end{proof}

\medskip

Combining the above theorem with A'Campo's result, we immediately obtain the following result on the Lefschetz number of $Q^{(*)}_{f, \nu}$:

\begin{cor}\label{cor:lef} Suppose that $f$ is homogeneous of degree $\mathfrak p^m$, where $\mathfrak p$ is prime. Suppose that $\nu\subseteq \Sigma f$ is a complex line which contains the origin. Then, $\mathfrak p$ divides $\mathfrak L\{Q^{(*)}_{f, \nu}\}$. 

\end{cor}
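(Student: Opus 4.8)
The plan is to pass to $\Z/\mathfrak p\Z$ and combine \thmref{thm:lef} with A'Campo's Theorem. First I would unwind the definition: the Lefschetz number is the finite alternating sum
$$
\mathfrak L\big\{Q^{(*)}_{f,\nu}\big\}=\sum_{k\geq 0}(-1)^k\operatorname{tr}\big(Q^{(k)}_{f,\nu}\big),
$$
which is genuinely finite because $F_{f,p}$ has the homotopy type of a finite CW complex of real dimension $\leq n$, so $H^k(F_{f,p};\Z)=0$ for $k>n$, while each $H^k(F_{f,p};\Z)$ is finitely generated (traces being taken on the free parts, as agreed in Section 2).

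Next I would apply \thmref{thm:lef}: for every $k$ we have $\operatorname{tr}\big(Q^{(k)}_{f,\nu}\big)\equiv\operatorname{tr}\big(T^{(k)}_{f,p}\big)\pmod{\mathfrak p}$ (this is exactly the ``in particular'' clause there, and it is implied by the equality of characteristic polynomials in $\Z/\mathfrak p\Z[x]$). Forming the alternating sum over $k$ and reducing modulo $\mathfrak p$ gives
$$
\mathfrak L\big\{Q^{(*)}_{f,\nu}\big\}\equiv\mathfrak L\big\{T^{(*)}_{f,p}\big\}\pmod{\mathfrak p}.
$$

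Finally, since $\nu\subseteq\Sigma f$ and $\Sigma f\subseteq V(f)$ in a neighborhood of $\0$, any $p\in\nu-\{\0\}$ close enough to $\0$ lies in $\Sigma f\cap V(f)$; hence A'Campo's Theorem (\thmref{thm:acampo}) yields $\mathfrak L\big\{T^{(*)}_{f,p}\big\}=0$. Combining the last two facts, $\mathfrak L\big\{Q^{(*)}_{f,\nu}\big\}\equiv 0\pmod{\mathfrak p}$, that is, $\mathfrak p\mid\mathfrak L\big\{Q^{(*)}_{f,\nu}\big\}$.

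There is essentially no serious obstacle here: all of the real content sits inside \thmref{thm:lef} (the Frobenius/binomial-coefficient computation, together with \corref{cor:inversechar} and \thmref{thm:relmono}) and inside A'Campo's Theorem, both of which may be assumed. The only mild bookkeeping is to observe that the Lefschetz sum is finite and that the hypothesis of A'Campo's Theorem is met, which is immediate from $\nu\subseteq\Sigma f$.
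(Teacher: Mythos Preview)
Your argument is correct and is exactly the approach the paper takes: the corollary is stated there as an immediate consequence of \thmref{thm:lef} together with A'Campo's Theorem, and you have simply written out the one-line alternating-sum computation that makes this explicit. There is nothing to add.
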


\medskip

Now we can prove:

\begin{thm}\label{thm:mainintro} Suppose that $s:=\dim\Sigma f\geq 1$,  $n-s\geq 1$, and  $f\in \C[z_0, \dots, z_n]$ is homogeneous of degree $\mathfrak p^m$, where $\mathfrak p$ is prime. Let $C$ be an $s$-dimensional components of $\Sigma f$, let $\nu$ be a line through the origin such that $\nu-\{\0\}\subseteq C^\circ$, and let $p\in\nu-\{\0\}$. Let $r_C$ be the remainder when $\mu_C^\circ-(-1)^{n+1-s}$ is divided by $\mathfrak p$.  

Then the algebraic multiplicity $e_\nu(1)$ of the eigenvalue $1$ for $Q^{(n-s)}_{f, \nu}$ acting on $H^{n-s}(F_{f, p};\Z)$ satisfies
$$
e_\nu(1)\leq \mu_C^\circ-\frac{r_C}{2}
$$
or, equivalently
$$
e_\nu(1)\leq\frac{\mu_C^\circ+\mathfrak p\left\lfloor\frac{\mu^\circ_C-(-1)^{n+1-s}}{\mathfrak p}\right\rfloor +(-1)^{n+1-s}}{2},
$$
where $\lfloor\,\rfloor$ denotes the floor function.
\end{thm}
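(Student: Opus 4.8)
The plan is to reduce the statement to an elementary count of the roots of unity that occur as eigenvalues of the single operator $Q:=Q^{(n-s)}_{f,\nu}$ acting on $H^{n-s}(F_{f,p};\Z)\cong\Z^{\mu_C^\circ}$, after squeezing a congruence for $\operatorname{tr}(Q)$ out of \corref{cor:lef}. First I would set up the cohomology of $F_{f,p}$ for $p\in\nu-\{\0\}$: since $\nu-\{\0\}\subseteq C^\circ$, the restriction of $f$ to a normal slice $N$ to $C$ at $p$ has an isolated critical point, so $F_{f,p}$ is homeomorphic to a polydisk times the Milnor fiber of that isolated singularity. Because $n-s\ge1$, this fiber is connected and its reduced cohomology is concentrated in degree $n-s$, where it equals $\Z^{\mu_C^\circ}$; hence $H^0(F_{f,p};\Z)=\Z$ with trivial monodromy and $H^k(F_{f,p};\Z)=0$ for $1\le k\ne n-s$. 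By \corref{cor:unity} (via \thmref{thm:relmono} and the Monodromy Theorem) every eigenvalue of $Q$ is a root of unity, and since $\operatorname{char}_Q(x)\in\Z[x]$ the non-real eigenvalues come in conjugate pairs, so $\operatorname{tr}(Q)$ is an integer.

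Next I would produce the key congruence. From the concentration of cohomology, the Lefschetz number of the internal monodromy is
$$
\mathfrak L\{Q^{(*)}_{f,\nu}\}=\operatorname{tr}\big(Q^{(0)}_{f,\nu}\big)+(-1)^{n-s}\operatorname{tr}(Q)=1+(-1)^{n-s}\operatorname{tr}(Q),
$$
and \corref{cor:lef} says $\mathfrak p$ divides it; therefore $\operatorname{tr}(Q)\equiv(-1)^{n+1-s}\pmod{\mathfrak p}$, so that
$$
\mu_C^\circ-\operatorname{tr}(Q)\equiv\mu_C^\circ-(-1)^{n+1-s}\equiv r_C\pmod{\mathfrak p}.
$$
(Equivalently one may use \thmref{thm:lef} to compare $\operatorname{tr}(Q)$ with $\operatorname{tr}\big(T^{(n-s)}_{f,p}\big)$ modulo $\mathfrak p$ and then evaluate the latter exactly via A'Campo's theorem.)

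The proof then finishes with eigenvalue bookkeeping. Writing $e:=e_\nu(1)$ and $g:=\mu_C^\circ-e$ for the number of eigenvalues of $Q$ different from $1$ (counted with algebraic multiplicity), every eigenvalue $\lambda$ has $|\lambda|=1$, hence $\operatorname{Re}\lambda\in[-1,1]$, with $\operatorname{Re}\lambda<1$ exactly when $\lambda\ne1$. Taking real parts in $\operatorname{tr}(Q)=\sum_\lambda\lambda$ gives
$$
\mu_C^\circ-2g\le\operatorname{tr}(Q)\le\mu_C^\circ ,
$$
so $0\le\mu_C^\circ-\operatorname{tr}(Q)\le2g$. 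Since $\mu_C^\circ-\operatorname{tr}(Q)$ is a nonnegative integer lying in the residue class of $r_C$ modulo $\mathfrak p$ with $0\le r_C<\mathfrak p$, it is $\ge r_C$; hence $2g\ge r_C$, i.e. $e=\mu_C^\circ-g\le\mu_C^\circ-\tfrac{r_C}{2}$. The second displayed form in the statement then follows by substituting $r_C=\big(\mu_C^\circ-(-1)^{n+1-s}\big)-\mathfrak p\big\lfloor\tfrac{\mu_C^\circ-(-1)^{n+1-s}}{\mathfrak p}\big\rfloor$.

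The heavy machinery is already in place (\thmref{thm:relmono}, \thmref{thm:lef}, \corref{cor:lef}, \corref{cor:unity}), so there is no serious analytic obstacle; the care is entirely in the bookkeeping. The points one must not fumble are: that the Lefschetz number of $Q^{(*)}_{f,\nu}$ has exactly the two contributions above, which rests on the Milnor fiber at a point of the top stratum $C^\circ$ having reduced cohomology in a single degree; that $\operatorname{tr}\big(Q^{(0)}_{f,\nu}\big)=1$, which is where the hypothesis $n-s\ge1$ enters, through connectedness of $F_{f,p}$; and the humble but indispensable remark that a nonnegative integer congruent to $r_C$ modulo $\mathfrak p$ is at least $r_C$.
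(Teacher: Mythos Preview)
Your argument is correct and follows essentially the same route as the paper: both use the concentration of $H^*(F_{f,p};\Z)$ in degrees $0$ and $n-s$ together with \corref{cor:lef} (equivalently \thmref{thm:lef} plus A'Campo) to get $\operatorname{tr}(Q)\equiv(-1)^{n+1-s}\pmod{\mathfrak p}$, and then combine this with the trivial eigenvalue bound $\mu_C^\circ-2g\le\operatorname{tr}(Q)\le\mu_C^\circ$ coming from \corref{cor:unity}. The only cosmetic difference is that the paper phrases the eigenvalue step as the inequality $e_\nu(1)\le\tfrac{1}{2}(\mu_C^\circ+\operatorname{tr}Q)$ proved by contradiction and then bounds $\operatorname{tr}Q\le\mu_C^\circ-r_C$, whereas you work directly with $\mu_C^\circ-\operatorname{tr}(Q)\ge r_C$; these are rearrangements of the same inequality.
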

\begin{proof} First note that $n-s\geq 1$ implies that $\widetilde H^{(n-s)}= H^{(n-s)}$. By definition of the remainder, we have non-negative integers $M$ and $r_C$ such that $\mu_C^\circ-(-1)^{n+1-s}=\mathfrak pM+r_C$ where $0\leq r_C<\mathfrak p$. 

Now, the number of eigenvalues of $Q^{(n-s)}_{f, \nu}$, counted with multiplicities is, of course, $\mu_C^\circ$, while $e_\nu(1)$ is the number of these eigenvalues which equal $1$. Denote the trace of $Q^{(n-s)}_{f, \nu}$ by $\operatorname{trc}_\nu$ and consider the trivial equality
$$
\mu_C^\circ=\frac{\mu_C^\circ+\operatorname{trc}_\nu}{2}+\frac{\mu_C^\circ-\operatorname{trc}_\nu}{2}.
$$

We claim that 
\begin{equation}
e_\nu(1)\leq \frac{\mu_C^\circ+\operatorname{trc}_\nu}{2}.\tag{$\dagger$}
\end{equation}
Suppose to the contrary that
$$e_\nu(1)> \frac{\mu_C^\circ+\operatorname{trc}_\nu}{2} = \operatorname{trc}_\nu+ \frac{\mu_C^\circ-\operatorname{trc}_\nu}{2}.
$$
Then there would be less than $\frac{\mu_C^\circ-\operatorname{trc}_\nu}{2}$ eigenvalues which are not $1$ whose sum $S_\nu$ is such that $\operatorname{trc}_\nu= e_\nu(1)+S_\nu$. By \corref{cor:unity}, all of the eigenvalues of $Q^{(n-s)}_{f, p}$ are roots of unity and so the smallest that $S_\nu$ could be would occur if all of the eigenvalues in its sum were $-1$, i.e.,
$$
S_\nu>(-1)\left(\frac{\mu_C^\circ-\operatorname{trc}_\nu}{2}\right).
$$
But then we would have
$$
\operatorname{trc}_\nu= e_\nu(1)+S_\nu> \operatorname{trc}_\nu+ \frac{\mu_C^\circ-\operatorname{trc}_\nu}{2}+(-1)\left(\frac{\mu_C^\circ-\operatorname{trc}_\nu}{2}\right)=\operatorname{trc}_\nu,
$$
a contradiction. Thus, $e_\nu(1)\leq \frac{\mu_C^\circ+\operatorname{trc}_\nu}{2}$.

\medskip

As $n\geq 2$, $F_{f,p}$ is connected and so  the trace of $Q^{(0)}_{f, \nu}$ is equal to $1$. Hence, by \thmref{thm:lef}, $1+(-1)^{n-s}\operatorname{trc}_\nu=\mathfrak p\hat m$ for some integer $\hat m$, or $\operatorname{trc}_\nu=\mathfrak p m+(-1)^{n+1-s}$ where $m=(-1)^{n-s}\hat m$. Of course, since the eigenvalues are all roots of unity, we have the inequality
$$
\mathfrak p m+(-1)^{n+1-s}=\operatorname{trc}_\nu\ \leq \ \mu^\circ_C = \mathfrak p M+r_C +(-1)^{n+1-s}.
$$
Therefore
$$
m\leq M+\frac{r_C}{\mathfrak p},
$$
and, as $m$ and $M$ are integers and $0\leq r_C<\mathfrak p$, we conclude that $m\leq M$ and so
$$
\operatorname{trc}_\nu=\mathfrak p m+(-1)^{n+1-s}\leq\mathfrak p M+(-1)^{n+1-s}=\mu^\circ_C-r_C.
$$
Finally, from ($\dagger$), we find
$$
e_\nu(1)\leq \frac{\mu_C^\circ+\operatorname{trc}_\nu}{2}\leq \frac{\mu_C^\circ+\mu^\circ_C-r_C}{2}=\mu_C^\circ-\frac{r_C}{2},
$$
as desired.

\smallskip

The equivalent inequality in the statement is obtained from this by considering $\mu_C^\circ-(-1)^{n+1-s}=\mathfrak pM+r_C$ and dividing by $\mathfrak p$ to obtain
$$
\frac{\mu_C^\circ-(-1)^{n+1-s}}{\mathfrak p}= M+\frac{r_C}{\mathfrak p},
$$
which implies that 
$$
\left\lfloor\frac{\mu^\circ_C-(-1)^{n+1-s}}{\mathfrak p}\right\rfloor=M.
$$
Thus,
$$
r_C = \mu_C^\circ-(-1)^{n+1-s}-\mathfrak p \left\lfloor\frac{\mu^\circ_C-(-1)^{n+1-s}}{\mathfrak p}\right\rfloor
$$
and the equivalent inequality follows. 
\end{proof}

\medskip

By combining \thmref{thm:supersiersma} with \thmref{thm:mainintro}, we conclude:

\medskip

\begin{thm}\label{thm:mainintro} Suppose that  $f\in \C[z_0, \dots, z_n]$ is homogeneous of degree $\mathfrak p^m$, where $\mathfrak p$ is prime, and let $s:=\dim\Sigma f$. For each $s$-dimensional component $C$ of $\Sigma f$, let $r_C$ be the remainder when $\mu_C^\circ-(-1)^{n+1-s}$ is divided by $\mathfrak p$. 

Then, we have the following upper-bound on the rank of the free abelian group $\widetilde H^{n-s}(F_{f, \0};\Z)$:
$$
\operatorname{rank}\widetilde H^{n-s}(F_{f, \0};\Z)\leq \sum_C \left\lfloor\mu^\circ_C-\frac{r_C}{2}\right\rfloor =\sum_C\left\lfloor\frac{\mu_C^\circ+\mathfrak p\left\lfloor\frac{\mu^\circ_C-(-1)^{n+1-s}}{\mathfrak p}\right\rfloor +(-1)^{n+1-s}}{2}\right\rfloor.
$$
\end{thm}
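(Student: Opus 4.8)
The plan is to read off this statement as a direct corollary of \thmref{thm:supersiersma} combined with the bound on the algebraic multiplicity $e_\nu(1)$ obtained in the preceding theorem. First I would apply the injection $\widetilde H^{n-s}(F_{f,\0};\Z)\hookrightarrow\bigoplus_C\operatorname{inv}\left\{\widetilde{\mathbf F}_{f,C^\circ}\right\}$ of \thmref{thm:supersiersma}, with $C$ ranging over the $s$-dimensional components of $\Sigma f$ through the origin, to get $\operatorname{rank}\widetilde H^{n-s}(F_{f,\0};\Z)\leq\sum_C\operatorname{rank}\operatorname{inv}\left\{\widetilde{\mathbf F}_{f,C^\circ}\right\}$. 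It then suffices to bound each summand by $\left\lfloor\mu_C^\circ-\frac{r_C}{2}\right\rfloor$.

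For the main case $s\geq 1$ and $n-s\geq 1$: for each such $C$, choose a complex line $\nu$ through the origin with $\nu-\{\0\}\subseteq C^\circ$ and a point $p\in\nu-\{\0\}$. As noted in the discussion preceding \corref{cor:special}, $\operatorname{inv}\left\{\widetilde{\mathbf F}_{f,C^\circ}\right\}\subseteq\ker\left\{\operatorname{id}-Q^{(n-s)}_{f,\nu}\right\}$, so the rank of $\operatorname{inv}\left\{\widetilde{\mathbf F}_{f,C^\circ}\right\}$ is at most the geometric multiplicity of the eigenvalue $1$ of $Q^{(n-s)}_{f,\nu}$, hence at most its algebraic multiplicity $e_\nu(1)$. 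The preceding theorem gives $e_\nu(1)\leq\mu_C^\circ-\frac{r_C}{2}$, and since the rank is an integer it is at most $\left\lfloor\mu_C^\circ-\frac{r_C}{2}\right\rfloor$; summing over $C$ gives the first inequality. The equality of the two right-hand expressions is exactly the reformulation carried out at the end of the proof of the preceding theorem, namely $\mu_C^\circ-(-1)^{n+1-s}=\mathfrak p M+r_C$ with $\left\lfloor(\mu_C^\circ-(-1)^{n+1-s})/\mathfrak p\right\rfloor=M$.

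It remains to handle the two boundary cases not covered by the preceding theorem, and I expect this bookkeeping to be the only mild obstacle. If $n-s=0$ then $f$ is non-reduced and, for $C=V(g_k)$, $r_C$ is the residue of $a_k$ modulo $\mathfrak p$ (using $(-1)^{n+1-s}=-1$); here the bound follows from Item 3 of \propref{prop:special} with $S=\{m\in\N:\mathfrak p\nmid m\}$, for which $S-1$ is exactly the set of $\mu_k^\circ$ with $r_C\neq 0$, once one checks that the terms $\left\lfloor\mu_C^\circ-\frac{r_C}{2}\right\rfloor$ that get dropped on the right are non-negative — true because $\mu_C^\circ=a_k-1\geq 1$ for components of $\Sigma f$ and $r_C\leq a_k$. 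If $s=0$ then homogeneity forces $\Sigma f=\{\0\}$ and the unique summand is $\operatorname{inv}\left\{\widetilde{\mathbf F}_{f,\{\0\}}\right\}=\widetilde H^n(F_{f,\0};\Z)$ of rank $\mu_C^\circ=(\mathfrak p^m-1)^{n+1}$; since $\mathfrak p^m-1\equiv-1\pmod{\mathfrak p}$ we get $r_C=0$, so the asserted bound reduces to the trivial equality $\operatorname{rank}\widetilde H^n(F_{f,\0};\Z)=\mu_C^\circ$. All the genuine arithmetic is in the preceding theorem; the only things to check here are the automatic inequality geometric $\leq$ algebraic multiplicity of the eigenvalue $1$ and these two degenerate cases.
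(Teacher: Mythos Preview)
Your proposal is correct and follows essentially the same route as the paper: combine \thmref{thm:supersiersma} with the eigenvalue bound from the preceding theorem for the main case $s\geq 1$, $n-s\geq 1$, and treat the degenerate cases separately. The only differences are cosmetic: for $n-s=0$ the paper argues directly by splitting on whether $\gcd\{a_k\}_k=1$ (using Item~2 of \propref{prop:special}), whereas you invoke Item~3 with $S=\{m:\mathfrak p\nmid m\}$ --- both work; and you explicitly dispose of $s=0$ (where $r_C=0$ and the bound is the trivial equality), a case the paper leaves implicit.
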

\begin{proof} This follows immediately from  \thmref{thm:supersiersma} and \thmref{thm:mainintro}, except for the case when $n-s=0$, which once again requires a special small argument.

Suppose $n-s=0$. We use our notation and results from  \propref{prop:special}. So,  consider the irreducible factorization $f=\prod_{k=1}^{r} g_k^{a_k}$ at the origin, and recall that $\mu_k^\circ=a_k-1$ and $\operatorname{rank}\widetilde H^{0}(F_{f, \0};\Z)=\operatorname{gcd}\{a_k\}_k-1$. 

If $\operatorname{gcd}\{a_k\}_k=1$, the desired inequality holds. So suppose that $\operatorname{gcd}\{a_k\}_k>1$. Since  $\operatorname{gcd}\{a_k\}_k$ divides the degree, it must be a positive power of $\mathfrak p$; in particular, $\mathfrak p$ divides each $a_k=\mu_k^\circ+1$. Then, for all $k$, $\mu^\circ_k-(-1)^{n+1-s}=\mu^\circ_k-(-1)^1=a_k$ is divisible by $\mathfrak p$ and so the summands in the desired inequality are simply $\mu_k^\circ$. Hence, the desired inequality is that of Item 2 of  \propref{prop:special}.
\end{proof}

\medskip

\section{Examples}

\begin{exm} Let us first look at an example of Siersma, where it is easy to determine $\operatorname{rank}H^{1}(F_{f, \0};\Z)$ precisely.

\medskip

Let $f=z^2y-xy^2$. So $n=2$, $f$ has degree $3$, and
$$
\Sigma f=V(-y^2, z^2-2xy, 2zy)=V(y,z)= x\textnormal{-axis}.
$$
Thus, there is only one component of $\Sigma f$, $\nu=\C\times\{(0,0)\}$, $s=\dim\Sigma f=1$, and we calculate $\mu_\nu^\circ$ by slicing with $x=1$ and calculating with proper intersections of analytic cycles at $(y,z)=(0,0)$ inside $x=1$:
$$
\mu_\nu^\circ=\left(V(z^2-2y, zy)\right)_\0=\left(V(z^2-2y)\cdot V(zy)\right)_\0 =
$$
$$
\left(V(z^2-2y)\cdot (V(z)+V(y))\right)_\0 = \left(V(-2y,z)\right)_\0+ \left(V(z^2, y)\right)_\0=1+2=3.
$$

\medskip

Thus the ``classical'' bound easy bound from \cite{siersmavarlad} is that rank\,$H^1(F_{f, \0}; \,\Z)\leq 3$, while the bound obtained from \thmref{thm:mainintro} is
$$
\operatorname{rank}H^{1}(F_{f, \0};\Z)\leq  \left\lfloor\frac{3+ 3\left\lfloor\frac{3-(-1)^2}{3}\right\rfloor +(-1)^2}{2}\right\rfloor =2.
$$

\medskip

Note that we obtain the same bound by using Item 5 of \corref{cor:special}.

\medskip

However,  $\operatorname{rank}H^{1}(F_{f, \0};\Z)$ can be easily calculated. As $f$ is homogeneous, $F_{f,\0}$  is diffeomorphic to $f^{-1}(1)$. Thus we consider the set where $z^2y-xy^2=1$. Noting that $y$ cannot be zero, we can solve this for $x$ and obtain
$$
x= \frac{z^2y-1}{y^2}.
$$
Therefore the Milnor fiber is diffeomorphic to $\C^*\times\C$ and so is homotopy-equivalent to a circle. Thus, $\operatorname{rank}H^{1}(F_{f, \0};\Z)=1.$
\end{exm}

\medskip

\begin{exm}

Let $f=u^{25}+w^{24}z-x^{22}yz^2$. So $n=4$ and $\mathfrak p=5$. We find
$$
\Sigma f=V(25u^{24},\, 24w^{23}z,\, -22x^{21}yz^2,\, -x^{22}z^2,\, w^{24}-2x^{22}yz)=
$$
$$
V(u, w, x)\cup V(u, w, z) =C_1\cup C_2.
$$
Thus $s=2$, and so for each $C_i$, $\mu_{C_i}^\circ-(-1)^{n+1-s}= \mu_{C_i}^\circ+1$.

We calculate $\mu^\circ_{C_1}$ and  $\mu^\circ_{C_2}$ by calculating the Milnor numbers at the origin of $f_{y_0, z_0}:=u^{25}+w^{24}z_0-x^{22}y_0z_0^2$ and $f_{x_0, y_0}:=u^{25}+w^{24}z-x_0^{22}y_0z^2$, for generic $x_0$, $y_0$, and $z_0$. We leave it as an exercise to show that  
$$\mu^\circ_{C_1}=24\cdot23\cdot21=11,592\hskip 0.2in\textnormal{and}\hskip 0.2in \mu^\circ_{C_2}=24\cdot23+24\cdot 24=1128.$$

\noindent Therefore the na\"ive bound given by \thmref{thm:supersiersma} is
$$
\operatorname{rank}H^2(F_{f,\0};\,\Z)\leq 11,592+1128=12,720.
$$

\medskip

But now we calculate the $r_{C_i}$'s and apply \thmref{thm:mainintro}. In $\Z/5\Z$, we have
$$
\mu^\circ_{C_1}+1=24\cdot23\cdot21+1=(-1)(-2)(-4)+1=-7=3
$$
and
$$
\mu^\circ_{C_2}+1=24\cdot 47+1=(-1)(2)+1=-1=4.
$$
Thus $r_{C_1}=3$ and $r_{C_2}=4$, and so  \thmref{thm:mainintro} tells us that
\begin{equation}
\operatorname{rank}H^2(F_{f,\0};\,\Z)\leq \left\lfloor 11,592-\frac{3}{2}\right\rfloor + (1128-2)=12, 716,\tag{$\dagger$}
\end{equation}
which, of course, is an improvement.

In this particular example, we can say more. Adding the $u^{25}$ term gives a Milnor fiber which is homotopy-equivalent to the one-point union of $24$ copies of the suspension of the Milnor fiber of $g:=w^{24}z-x^{22}yz^2\in \C[w, x, y, z]$. Hence the rank of $H^2(F_{f,\0};\,\Z)$ must be a multiple of $24$, and so ($\dagger$) implies that
$$
\operatorname{rank}H^2(F_{f,\0};\,\Z)\leq (529)(24)=12, 696.
$$

\medskip

Of course, this leads to the question: if we apply  \thmref{thm:mainintro} to $g=w^{24}z-x^{22}yz^2$, and then multiply by 24, do we obtain the same upper-bound? We leave it as an exercise for the reader to show that \thmref{thm:mainintro} implies that $\operatorname{rank}H^1(F_{g,\0};\,\Z)\leq 528$,
which gives us
$\operatorname{rank}H^2(F_{f,\0};\,\Z)\leq (528)(24)=12, 672$.
\end{exm}

\medskip

\section{Closing remarks}

Suppose $f\in \C[z_0, \dots, z_n]$ is homogeneous of degree $d$, let $s:=\dim\Sigma f$, and assume $s\geq 2$. The knowledgable reader may suspect that L\^e's Attaching Result from \cite{leattach} would help us put a good upper bound on the rank of $\widetilde H^{n-s}(F_{f, \0};\,\Z)$. It is true that iterating Le's result tells us that
$$
\widetilde H^{n-s}(F_{f, \0};\,\Z)\cong \widetilde H^{n-s}(F_{f_{|_M}, \0};\,\Z)\hookrightarrow \widetilde H^{n-s}(F_{f_{|_{M\cap H}}, \0};\,\Z)
$$
where $M$ is a generic linear subspace of codimension $(s-1)$ in $\C^{n+1}$ and $H$ is a generic hyperplane through the origin. Thus $f_{|_M}:\C^{n-s+2}\rightarrow\C$ has a $1$-dimensional critical locus, and $f_{|_{M\cap H}}:\C^{n-s+1}\rightarrow\C$ has a $0$-dimensional critical locus with Milnor number $(d-1)^{n-s+1}$.

So, $(d-1)^{n-s+1}$ is an upper bound on the rank of  $\widetilde H^{n-s}(F_{f, \0};\,\Z)$, but it is far from good. One might hope that analyzing $f_{|_M}:\C^{n-s+2}\rightarrow\C$, with its $1$-dimensional critical locus, would provide a new upper bound on the rank of $\widetilde H^{n-s}(F_{f_{|_M}, \0};\,\Z)$ and thus on the rank of  $\widetilde H^{n-s}(F_{f, \0};\,\Z)$, but we see no way to obtain better bounds via this method than the bounds we obtained in the previous sections.

\bigskip

One might hope to use cohomology with $\Z/\mathfrak p\Z$ coefficients to get a better bound on the rank of  $\widetilde H^{n-s}(F_{f, \0};\,\Z)$ in the case where the degree $(n-s)$ homology (not cohomology) contains $\mathfrak p$-torsion, but then one loses control over the inequalities in \thmref{thm:mainintro}.

\bigskip

Finally, we tried for some time to obtain a result that generalizes \thmref{thm:mainintro} to the case of a homogeneous polynomials of arbitrary degree. We could not find any such useful generalization. Nonetheless, we feel that there must be other number-theoretic restrictions on the cohomology of the Milnor fiber waiting to be found.

\bibliographystyle{plain}

\bibliography{Masseybib}

\end{document}